\newcommand{\nc}{\newcommand}
\nc{\les}{\lesssim}
\nc{\nit}{\noindent}
\nc{\nn}{\nonumber}
\nc{\D}{\partial}
\nc{\diff}[2]{\frac{d #1}{d #2}}
\nc{\diffn}[3]{\frac{d^{#3} #1}{d {#2}^{#3}}}
\nc{\pdiff}[2]{\frac{\partial #1}{\partial #2}}
\nc{\pdiffn}[3]{\frac{\partial^{#3} #1}{\partial{#2}^{#3}}}
\nc{\abs}[1] {\lvert #1 \rvert}
\nc{\cAc}{{\cal A}_c}
\nc{\cE}{{\cal E}}
\nc{\cF}{{\cal F}}
\nc{\cP}{{\cal P}}
\nc{\cV}{{\cal V}}
\nc{\cQ}{{\cal Q}}
\nc{\cGin}{{\cal G}_{\rm in}}
\nc{\cGout}{{\cal G}_{\rm out}}
\nc{\cO}{{\cal O}}
\nc{\Lav}{{\cal L}_{\rm av}}
\nc{\cL}{{\cal L}}
\nc{\cB}{{\cal B}}
\nc{\cZ}{{\cal Z}}
\nc{\cR}{{\cal R}}
\nc{\cT}{{\cal T}}
\nc{\cY}{{\cal Y}}
\nc{\cX}{{\cal X}}
\nc{\cXT}{{{\cal X}(T)}}
\nc{\cBT}{{{\cal B}(T)}}
\nc{\vD}{{\vec \mathcal{D}}}
\nc{\efield}{\mathcal{E}}
\nc{\vE}{{\vec \efield}}
\nc{\vB}{{\vec \mathcal{B}}}
\nc{\vH}{{\vec \mathcal{H}}}
\nc{\mR}{\mathcal R}
\nc{\mF}{\mathcal F}
\nc{\mE}{\mathcal E}
\nc{\ty}{{\tilde y}}
\nc{\tu}{{\tilde u}}
\nc{\tV}{{\tilde V}}
\nc{\Pc}{{\bf P_c}}
\nc{\bx}{{\bf x}}
\nc{\bX}{{\bf X}}
\nc{\bXYZ}{{\bf XYZ}}
\nc{\bY}{{\bf Y}}
\nc{\bF}{{\bf F}}
\nc{\bS}{{\bf S}}
\nc{\dV}{{\delta V}}
\nc{\dE}{{\delta E}}
\nc{\TT}{{\Theta}}
\nc{\dPsi}{{\delta\Psi}}
\nc{\order}{{\cal O}}
\nc{\Rout}{R_{\rm out}}
\nc{\eplus}{e_+}
\nc{\eminus}{e_-}
\nc{\epm}{e_\pm}
\nc{\sgn}{\text{sgn}}
\nc{\eps}{\varepsilon}
\nc{\vnabla}{{\vec\nabla}}
\nc{\G}{\Gamma}
\nc{\w}{\omega}
\nc{\mh}{h}
\nc{\mg}{g}
\nc{\vphi}{\varphi}
\nc{\tlambda}{\tilde\lambda}
\nc{\be}{\begin{equation}}
\nc{\ee}{\end{equation}}
\nc{\ba}{\begin{eqnarray}}
\nc{\ea}{\end{eqnarray}}
\nc{\g}{\gamma}
\nc{\ol}{\overline}
\newtheorem{theorem}{Theorem}[section]
\newtheorem{lemma}[theorem]{Lemma}
\newtheorem{prop}[theorem]{Proposition}
\newtheorem{corollary}[theorem]{Corollary}
\newtheorem{rmk}[theorem]{Remark}
\newtheorem{asmp}[theorem]{Assumption}
\nc{\pT}{\partial_T}
\nc{\pz}{\partial_z}
\nc{\pt}{\partial_t}
\nc{\la}{\langle}
\nc{\ra}{\rangle}
\nc{\infint}{\int_{-\infty}^{\infty}}
\nc{\halfwidth}{6.5cm}
\nc{\figwidth}{10cm}
\newcommand{\f}{\frac}
\nc{\nlayers}{L} \nc{\nsectors}{M}
\nc{\indicator}{\mathbf{1}}
\nc{\Rhole}{R_{\rm hole}}
\nc{\Rring}{R_{\rm ring}}
\nc{\neff}{n_{\rm eff}}
\nc{\Frem}{F_{\rm rem}}
\nc{\R}{\mathbb R}
\nc{\mJ}{\mathcal J}
\nc{\C}{\mathbb C}
\nc{\Z}{\mathbb Z}
\nc{\N}{\mathbb N}
\nc{\DD}{\Delta}
\nc{\cD}{\mathcal D}
\nc{\lnorm}{\left\|}
\nc{\rnorm}{\right\|}
\nc{\rnormp}{\right\|_{\ell^{p,\eps}}}
\nc{\rar}{\rightarrow} 
\begin{document}
	
	\begin{abstract}
		
		We consider the higher order Schr\"odinger operator $H=(-\Delta)^m+V(x)$ in $n$ dimensions with real-valued potential $V$ when $n>2m$, $m\in \mathbb N$.    We adapt our recent results for $m>1$  to show that the wave operators are bounded on $L^p(\R^n)$ for the full the range $1\leq p\leq \infty$ in both even and odd dimensions without assuming the potential is small.  The approach used works without distinguishing even and odd cases,  captures the endpoints $p=1,\infty$, and  somehow simplifies the low energy argument even in the classical case of $m=1$.		 
	\end{abstract}

	\title[Wave operators for higher order  Schr\"odinger operators]{\textit{A note on endpoint $L^p$-continuity of wave operators for classical and higher order Schr\"odinger operators } } 
	
	\author[M.~B. Erdo\smash{\u{g}}an, W.~R. Green]{M. Burak Erdo\smash{\u{g}}an and William~R. Green}
	\thanks{ The first author was partially supported by the NSF grant  DMS-2154031 and Simons Foundation Grant 634269. The second author is partially supported by Simons Foundation
Grant 511825. }
	\address{Department of Mathematics \\
		University of Illinois \\
		Urbana, IL 61801, U.S.A.}
	\email{berdogan@illinois.edu}
	\address{Department of Mathematics\\
		Rose-Hulman Institute of Technology \\
		Terre Haute, IN 47803, U.S.A.}
	\email{green@rose-hulman.edu}
	%\subjclass{35Q41, 42B20}

	\maketitle

\section{Introduction}

We consider equations of the form 
\begin{align*}
	i\psi_t =(-\Delta)^m\psi +V\psi, \qquad x\in \R^n, \quad  m\in \mathbb N.
\end{align*}
When $m=1$ this is the classical Schr\"odinger equation. Here $V$ is a real-valued potential with   polynomial decay, $|V(x)|\les \la x\ra^{-\beta}$ for some sufficiently large $\beta>0$.  
We denote the free operator by $H_0=(-\Delta)^m$ and the perturbed operator by $H=(-\Delta)^m+V$. We study the $L^p$ boundedness of the wave operators, which are defined by
$$
W_{\pm}=s\text{\ --}\lim_{t\to \pm \infty} e^{itH}e^{-itH_0}.
$$
The wave operators are of interest in scattering theory. For the classes of potentials $V$ we consider, the wave operators exist and are asymptotically complete,  \cite{RS,ScheArb,agmon,Hor,Sche}.   In addition,  we have the intertwining identity
$$
	f(H)P_{ac}(H)=W_\pm f((-\Delta)^m)W_{\pm}^*.
$$
Here $P_{ac}(H)$ is the projection onto the absolutely continuous spectral subspace of $H$, and $f$ is any Borel function.  This allows one to deduce $L^p$-based mapping properties of operators of the form $f(H)P_{ac}(H)$ from those of the much simpler operators $f((-\Delta)^m)$.  Other foundational work was done in \cite{Kur1,Kur2} in the context of scattering theory.

The usual starting point to study the $L^p$ boundedness of the wave operators is the stationary representation 
\begin{align}\label{eqn:stat rep}
	W_+u
	&=u-\frac{1}{2\pi i} \int_{0}^\infty \mR_V^+(\lambda) V [\mR_0^+(\lambda)-\mR_0^-(\lambda)] u \, d\lambda,
\end{align}
where $\mR_V(\lambda)=((-\Delta)^{m}+V-\lambda)^{-1}$, $\mR_0(\lambda)=((-\Delta)^m-\lambda)^{-1}$, and the `+' and `-' denote the usual limiting values as $\lambda$ approaches the positive real line from above and below, \cite{agmon,soffernew}.  It suffices to consider $W_+$ as $W_-=\mathcal C W_+ \mathcal C $, where $\mathcal Cu(x)=\overline{u}(x)$ is the conjugation operator.  Noting that the identity operator is bounded for all $1\leq p\leq \infty$, one needs only control the contribution of the integral involving the resolvent operators.

Using resolvent identity, we write
$$
\mR_V^+=\sum_{j=0}^{2k-1} (-1)^j \mR_0^+ (V\mR_0^+)^j +(\mR_0^+V)^{k} \mR_V^+ (V\mR_0^+)^k.
$$ 
We denote the contribution of the $j$th term of the finite sum to \eqref{eqn:stat rep} by $W_j$ and the contribution of the remainder by $W_{r,k}$.  To study the $L^p$ boundedness of   $W_{r,k}$ we need to consider whether $\lambda$ is in a neighborhood of zero or not.   To that end, let $\chi\in C^\infty_0$ be  a smooth cut-off function for a sufficiently small neighborhood of zero, with $\widetilde \chi=1-\chi$  the complementary cut-off away from zero. We define
$$
W_{low,k}u= \frac{1}{2\pi i}  \int_{0}^\infty \chi(\lambda) (\mR_0^+(\lambda) V)^{k} \mR_V^+(\lambda)  (V\mR_0^+(\lambda) )^k V [\mR_0^+(\lambda)-\mR_0^-(\lambda)] u \, d\lambda,
$$
$$
W_{high,k}u= \frac{1}{2\pi i}  \int_{0}^\infty \widetilde\chi(\lambda) (\mR_0^+(\lambda) V)^{k} \mR_V^+(\lambda)  (V\mR_0^+(\lambda) )^k V [\mR_0^+(\lambda)-\mR_0^-(\lambda)] u \, d\lambda.
$$
Throughout the paper, we write $\la x\ra$ to denote $  (1+|x|^2)^{\f12}$, $A\les B$ to say that there exists a constant $C$ with $A\leq C B$, and write $a-:=a-\epsilon$ and $a+:=a+\epsilon$ for some $\epsilon>0$.
Our main result is
\begin{theorem}\label{thm:main low}
	Let $n>2m\geq 2$. 
	Assume that $|V(x)|\les \la x\ra^{-\beta}$, where $V$ is a real-valued potential on $\R^n$ and $\beta>n+4$ when $n$ is odd and $\beta>n+3$ when $n$ is even.  Also assume $H=(-\Delta)^m+V(x)$ has no positive
	eigenvalues and zero energy is regular. Then $W_{low,k}$ extends to a  bounded operator on $L^p(\R^n)$ for all $1\leq p\leq \infty$ provided that $k$ is sufficiently large.
\end{theorem}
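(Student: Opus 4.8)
The plan is to prove the two Schur-type bounds
\[
\sup_{x\in\R^n}\int_{\R^n}|K(x,y)|\,dy<\infty,\qquad
\sup_{y\in\R^n}\int_{\R^n}|K(x,y)|\,dx<\infty
\]
for the Schwartz kernel $K$ of $W_{low,k}$; these give boundedness on $L^1$ and on $L^\infty$, hence on every $L^p$, $1\le p\le\infty$, by interpolation. A brute-force kernel estimate of this kind is viable precisely in this regime because the $2k$ factors of $V$ inside $W_{low,k}$ supply spatial decay that, once $k$ is large, outweighs the only singularities in play: the locally integrable (since $n>2m$) on-diagonal singularities $|x-y|^{-(n-2m)}$ of the free resolvent kernels.

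To write down $K$ I would first change variables $\lambda=\mu^{2m}$, so that the Jacobian $2m\mu^{2m-1}d\mu$ combines with the $\mu^{n-2m}$ prefactor of the spectral density $[\mR_0^+-\mR_0^-](\mu^{2m})$ to produce the $\mu^{n-1}d\mu$ of polar coordinates; in particular the integrand is non-singular at the threshold $\mu=0$. I would then insert the free resolvent kernels in a single oscillatory form valid in all dimensions --- schematically $\mR_0^\pm(\mu^{2m})(x,y)=|x-y|^{-(n-2m)}\sum_\ell e^{ic_\ell\mu|x-y|}\,\omega_\ell(\mu|x-y|)$ with $\omega_\ell$ of symbol type and the $c_\ell$ real (the single oscillatory branch) or with positive imaginary part (exponentially decaying branches), all coming from $(-\Delta)^m-\mu^{2m}=\prod_\ell(-\Delta-\zeta_\ell\mu^2)$, together with the analogous, milder representation $\mu^{n-2m}\,\Omega(\mu|x-y|)$ for the spectral measure. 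It is this uniform representation, in place of parity-dependent explicit kernels, that lets the argument proceed identically in even and odd dimensions. For the middle factor I would \emph{not} fully expand $\mR_V^+$; I would only use that, zero energy being regular, $\mR_V^+(\mu^{2m})$ and finitely many of its $\mu$-derivatives (as many as $\beta>n+4$, resp.\ $\beta>n+3$, allow) are bounded between suitable weighted $L^2$ spaces uniformly for $\mu$ in the support of $\chi$ --- the low-energy input prepared earlier --- with the flanking $(\mR_0^+V)^k$ and $(V\mR_0^+)^k$ carried along pointwise and a Cauchy--Schwarz pairing absorbing the middle factor. Substituting, $K(x,y)$ becomes a finite sum of iterated integrals over intermediate points $z_1,\dots,z_N$ (with $N\gtrsim k$, $z_0=x$, $z_{N+1}=y$), each $z_j$ weighted by $V(z_j)$, with an inner one-dimensional $\mu$-integral whose integrand is $\chi(\mu)$ times a smooth amplitude times $e^{i\mu\Phi}$, where $\Phi=\sum_\ell\eps_\ell|z_\ell-z_{\ell+1}|$ and all $\eps_\ell=+1$ except for the single sign flip produced by the $\mR_0^-$ term of the spectral measure.

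The $\mu$-integral is the heart of the matter, and I would treat it by non-stationary phase. Integrating by parts repeatedly in $\mu$ --- distributing the derivatives between the smooth free-resolvent amplitudes, which absorb arbitrarily many, and $\mR_V^+$, which absorbs only boundedly many --- yields a factor $\la\Phi\ra^{-L}$ for $L$ as large as one wishes, at the price of polynomial growth of total degree $\le L$ in the link lengths $|z_\ell-z_{\ell+1}|$. When all $\eps_\ell=+1$ one has $\Phi=\sum_\ell|z_\ell-z_{\ell+1}|$, which dominates each link, so $\la\Phi\ra^{-L}$ both absorbs this growth and, for $L$ large, leaves arbitrarily strong decay $\prod_\ell\la z_\ell-z_{\ell+1}\ra^{-N_\ell}$ with $\sum_\ell N_\ell$ as large as desired (the single sign-flipped configuration, discussed below, needs an extra argument). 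One then assembles the pieces: the link singularities $|z_\ell-z_{\ell+1}|^{-(n-2m)}$ are locally integrable; the potential weights $\la z_j\ra^{-\beta}$ have $\beta>n$; and the phase supplies the missing $\la\cdot\ra^{-n-}$ on each link and in the free variable. Choosing $k$ (hence $N$) large enough that there are enough decaying factors to carry out every $z_j$-integration and the remaining $x$- or $y$-integration uniformly then yields both Schur bounds, and interpolation finishes the range $1\le p\le\infty$.

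I expect the main obstacle to be the single sign-flipped configuration of $\Phi$, where the phase can be stationary and non-stationary phase fails: there one must instead exploit that the potentials confine $z_1,\dots,z_N$ near the origin --- so that a near-stationary phase forces the free variable into a controlled region --- together with the built-in decay of the oscillatory amplitudes, and it is this step, together with the bookkeeping needed to keep the limited $\mu$-regularity of $\mR_V^+$ from cutting off enough integrations by parts, that pins down the precise thresholds $\beta>n+4$ (odd $n$), $\beta>n+3$ (even $n$) and the requirement that $k$ be sufficiently large. Everything else --- the change of variables, the uniform resolvent representation, the weighted bounds on $\mR_V^+$, and the final assembly via Schur's test --- is by now routine.
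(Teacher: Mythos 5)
Your framework (proving the two Schur-type bounds for the kernel, changing variables so the spectral density contributes $\mu^{n-1}\,d\mu$, using a parity-uniform oscillatory representation of the free resolvent, and treating the middle factor only through finitely many $\mu$-derivatives bounded on weighted $L^2$) is exactly the paper's framework. The genuine gap is your central quantitative claim that non-stationary phase "yields a factor $\la\Phi\ra^{-L}$ for $L$ as large as one wishes." It cannot: as you yourself note, $\mR_V^+(\mu^{2m})$ (equivalently $M^+(\mu)^{-1}=(U+v\mR_0^+(\mu^{2m})v)^{-1}$ after the symmetric resolvent identity) absorbs only boundedly many $\mu$-derivatives -- under $\beta>n+4$ (odd), $\beta>n+3$ (even) one gets only about $\lceil n/2\rceil+1$ of them, and only with the weights $\mu^{k-1}\partial_\mu^k$; moreover each derivative that lands on the amplitudes or on $\mu^{n-1}$ costs a factor $\mu^{-1}$, so integrability at the threshold caps the number of useful integrations by parts independently of how large you take $k$ (more factors of $V$ buy spatial decay in the internal variables, not more $\mu$-regularity). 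Hence oscillation alone produces at best roughly $\la\Phi\ra^{-(\lceil n/2\rceil+1)}$, far short of the $\la\cdot\ra^{-n-}$ per free variable your final assembly needs, so the Schur bounds do not close as outlined.

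Concretely, what is missing is the mechanism the paper uses to get past the borderline $r^{-n}$: it combines (i) the $\sim n/2+1$ available integrations by parts with (ii) the extra decay $\la\mu r\ra^{\frac{1-n}{2}}$ of the spectral-measure amplitude $\mR_0^+-\mR_0^-$, which together give about $r^{-(n+\frac12)}$ when $\mu r\gtrsim1$; and, crucially, (iii) a threshold-cancellation argument in the regime where the outgoing distance $r_2$ is large but $\mu r_2\les 1$, where neither oscillation nor amplitude decay is available and the direct estimate $\int_0^{1/r_2}\mu^{n-1}d\mu\approx r_2^{-n}$ fails admissibility by a logarithm. The paper handles this by writing $\mR_0^+(\mu^{2m})=\mR_0^+(0)+E(\mu)$ and $\Gamma(\mu)=\Gamma(0)+\Gamma_1(\mu)$, so the error terms vanish at $\mu=0$ and yield an extra power of $r_2^{-1}$, while the leading term's $\mu$-integral is recognized as the rapidly decaying kernel of $\chi(\sqrt{-\Delta})$; nothing in your plan produces this gain. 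Finally, your treatment of the sign-flipped configuration is not right as stated: near-stationarity only forces $|x-z_1|\approx|z_N-y|$ (both possibly huge), not a controlled region; in the paper this case ($r_1\approx r_2\gg1$) is closed by the amplitude decay $\la\mu r_1\ra^{1-2m}$ together with the change of variables $\eta=\mu(r_1-r_2)$ and $n>2m$. So the architecture is the right one, but the two cancellation inputs (spectral-measure amplitude decay used everywhere, and the threshold subtraction in the unbalanced regime) are essential and absent, and the "arbitrarily many integrations by parts" step is false.
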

In fact the proof we supply works for all $k$ if $2m<n<4m$. We need sufficiently large $k$ when $n\geq 4m$ due to local singularities of the free resolvents. 
We note that this result is new only in the endpoint cases $p=1,\infty$ when $m>1$ and $n>2m$ even.  The main novelty is that our method applies to all cases $n>2m, m\geq 1, 1\leq p\leq \infty$ in one self-contained argument,  see  Proposition~\ref{lem:low tail low d} below.

To put this result in the context we recall that  the first $L^p$ boundedness result is   the seminal paper of Yajima, \cite{YajWkp1}, for $m=1$.  By controlling the Born series terms, the result was shown to hold for all $1\leq p\leq \infty$ for small potentials.  To remove this smallness assumption, the main difficulty is in controlling the contribution of $W_{low, k}$.  The behavior of this operator differs in even and odd dimensions. In \cite{YajWkp1,YajWkp2,YajWkp3}, Yajima provided arguments that removed smallness or positivity assumptions on the potential for all dimensions $n\geq 3$.  Yajima later simplified these arguments and considered the effect of zero energy eigenvalues and/or resonances in \cite{Yaj} when $n$ is odd and with Finco in \cite{FY} when $n$ is even for $n>4$.

We now give more details in the case $m>1$ to state the new corollary of our result above on the $L^p$ boundedness of wave operators.  
Let $H^\delta$ be the Sobolev space of functions with $\|\la \cdot \ra^{\delta} \mF({f})\|_2<\infty$, where $\mF({f})$ denotes the Fourier transform of $f$.
\begin{asmp}\label{asmp:Fourier}
	For some $0<\delta \ll 1$, assume that the real-valued potential $V$ satisfies the condition
	\begin{enumerate}[i)]
		 
		\item $\big\| \la \cdot \ra^{\frac{4m+1-n}{2}+\delta} V(\cdot)\big\|_{2}<C$   when $2m<n<4m-1$,
		
		\item $\big\|\la \cdot \ra^{1+\delta}V(\cdot)\big\|_{H^{\delta}}<C$ when $n=4m-1$,
		
		\item  $\big\|\mathcal F(\la \cdot \ra^{\sigma} V(\cdot))\big\|_{L^{ \frac{n-1-\delta}{n-2m-\delta} }}<C$ for some $\sigma>\frac{2n-4m}{n-1-\delta}+\delta$   when $n>4m-1$.
	\end{enumerate}
	
\end{asmp}
In \cite{EGWaveOp}, by adapting Yajima's $m=1$ argument in \cite{YajWkp1}, it was shown that the contribution of the terms of the Born series may be bounded by
\begin{align*}
	\|W_j\|_{L^p\to L^p}\leq C^j  \| V\|_{n,m}^j, 
\end{align*}	
where $\|V\|_{n,m}$ denotes the norm used in Assumption~\ref{asmp:Fourier} when $m>1$ for the different ranges of $n$ considered.
In addition, it was shown that if $|V(x)|\les \la x\ra^{-\beta}$ for some $\beta>n+5$ when $n$ is odd and $\beta>n+4$ when $n$ is even and if $k$ is sufficiently large (depending on $m$ and $n$), then $W_{high,k}$ is a bounded operator on $L^p$ for all $1\leq p\leq \infty$.

Combining these facts with Theorem~\ref{thm:main low}, we have the following   result which is new in the case $n$ is even.

\begin{corollary}\label{thm:full} 
	Fix  $m>1$ and let $n>2m$. 
	Assume that  $V$  satisfies Assumption~\ref{asmp:Fourier} and in addition
	\begin{enumerate}[i)]
		\item $|V(x)|\les \la x\ra^{-\beta}$  for some $\beta>n+5$ when  $n$ is odd and 
		for some  $\beta>n+4$  when $n$ is even,
		\item $H=(-\Delta)^m+V(x)$ has no positive
		eigenvalues and zero energy is regular. 
	\end{enumerate}
	Then,	the wave operators extend to bounded operators on $L^p(\R^n)$ for all $1\leq p\leq\infty$.  
	
\end{corollary}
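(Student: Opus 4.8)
The plan is to read off Corollary~\ref{thm:full} by collecting the bounds for the three groups of operators appearing in the resolvent expansion of $W_+$. Inserting the identity $\mR_V^+=\sum_{j=0}^{2k-1}(-1)^j\mR_0^+(V\mR_0^+)^j+(\mR_0^+V)^k\mR_V^+(V\mR_0^+)^k$ into the stationary representation \eqref{eqn:stat rep} expresses $W_+$ as the identity operator plus a finite linear combination, with coefficients in $\{\pm 1\}$, of the operators $W_0,\dots,W_{2k-1}$, $W_{low,k}$ and $W_{high,k}$, valid for every $k\in\N$. Since the identity is bounded on $L^p$ for all $1\leq p\leq\infty$, it suffices to bound $W_0,\dots,W_{2k-1}$, $W_{low,k}$ and $W_{high,k}$ on $L^p$ for all $1\leq p\leq\infty$ for a single sufficiently large $k$, and then sum. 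The statement for $W_-$ then follows from $W_-=\mathcal C W_+\mathcal C$, since the conjugation $\mathcal C$ is an isometry on each $L^p(\R^n)$.

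For the finitely many Born terms I would invoke the first estimate quoted from \cite{EGWaveOp}: since $V$ satisfies Assumption~\ref{asmp:Fourier} by hypothesis, $\|W_j\|_{L^p\to L^p}\leq C^j\|V\|_{n,m}^j<\infty$ for each $j$ and all $1\leq p\leq\infty$. The range $0\leq j\leq 2k-1$ being finite, no smallness of $V$ is needed and the triangle inequality bounds $\sum_{j=0}^{2k-1}\pm W_j$ on every $L^p$. For the high-energy remainder I would appeal to the second result quoted from \cite{EGWaveOp}: hypothesis (i), i.e.\ $|V(x)|\les\la x\ra^{-\beta}$ with $\beta>n+5$ for $n$ odd and $\beta>n+4$ for $n$ even, together with the spectral hypothesis (ii) that $H$ has no positive eigenvalues and zero energy is regular, implies that $W_{high,k}$ is bounded on $L^p$ for all $1\leq p\leq\infty$ once $k\geq k_1(m,n)$.

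It remains to control $W_{low,k}$, and here I would apply Theorem~\ref{thm:main low} directly. Its hypotheses are met: $m>1$ forces $2m\geq 4$, so the dimensional condition $n>2m\geq 2$ holds; the spectral hypothesis (ii) is exactly what is required; and hypothesis (i) is strictly stronger than the decay condition $\beta>n+4$ ($n$ odd), $\beta>n+3$ ($n$ even) demanded there. Hence $W_{low,k}$ extends to a bounded operator on $L^p$ for all $1\leq p\leq\infty$ provided $k\geq k_2(m,n)$. Taking $k=\max\{k_1(m,n),k_2(m,n)\}$ and adding the three contributions proves the corollary.

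At the level of the corollary there is no essentially new obstacle: the argument is bookkeeping on top of Theorem~\ref{thm:main low} and the two cited estimates from \cite{EGWaveOp}. The only points needing a little care are verifying that hypothesis (i) simultaneously dominates the decay exponents needed by both Theorem~\ref{thm:main low} and the high-energy bound, and observing that the finite Born sum requires no smallness assumption; all of the genuine analytic work — in particular the unified treatment of even and odd $n$ and of the endpoints $p=1,\infty$ — is already contained in Theorem~\ref{thm:main low}.
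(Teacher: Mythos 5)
Your argument is correct and is essentially the paper's own proof: the corollary is obtained exactly by combining the Born-series bounds and the $W_{high,k}$ result from \cite{EGWaveOp} with Theorem~\ref{thm:main low} for $W_{low,k}$, choosing $k$ large enough and noting that the decay hypothesis (i) dominates the exponents needed at low energy. Your additional bookkeeping (finiteness of the Born sum removing any smallness requirement, and $W_-=\mathcal C W_+\mathcal C$) is exactly the intended reading.
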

By applying the intertwining identity and the known $L^1\to L^\infty$ dispersive bound of the free solution operator $e^{-it(-\Delta)^m}$ and for $(-\Delta)^{\frac{n(m-1)}{2}}e^{-it(-\Delta)^m}$, we obtain the corollary below. 
The second bound below was  observed in \cite{EGG} for the free operator, and was used  to obtain counterexamples for the $L^p$ boundedness of wave operators. 
\begin{corollary}
	Under the assumptions of Corollary~\ref{thm:full}, we obtain the global dispersive estimates
	$$
		\|e^{-itH}P_{ac}(H)f\|_{\infty} \les |t|^{-\frac{n}{2m}} \|f\|_1.
	$$
	$$
		\|H^{\frac{n(m-1)}{2m}}e^{-itH}P_{ac}(H)f\|_{\infty} \les |t|^{-\frac{n}{2}} \|f\|_1.
	$$
\end{corollary}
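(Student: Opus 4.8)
The plan is to derive both estimates from the intertwining identity $f(H)P_{ac}(H)=W_\pm f((-\Delta)^m)W_\pm^*$ together with the $L^p$-boundedness of the wave operators supplied by Corollary~\ref{thm:full} and the known dispersive decay of the free propagator. First I would take $f(\lambda)=e^{-it\lambda}$, a bounded Borel function, which turns the intertwining identity into the operator identity
$$
e^{-itH}P_{ac}(H)=W_\pm\, e^{-it(-\Delta)^m}\,W_\pm^*.
$$
By Corollary~\ref{thm:full}, $W_\pm$ is bounded on $L^p(\R^n)$ for every $1\leq p\leq\infty$, and consequently so is the adjoint $W_\pm^*$: for $1<p<\infty$ this is immediate by duality, and the endpoints $p=1,\infty$ follow as well, either by duality using that $W_\pm,W_\pm^*$ are integral operators, or directly, since $W_\pm^*$ obeys a stationary representation of the same structure as $W_\pm$, to which the argument of Theorem~\ref{thm:main low} (and the high-energy and Born-series estimates quoted above it) applies verbatim. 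Inserting the classical free dispersive bound $\|e^{-it(-\Delta)^m}g\|_\infty\les|t|^{-n/(2m)}\|g\|_1$ and using $W_\pm:L^\infty\to L^\infty$ together with $W_\pm^*:L^1\to L^1$ then gives
$$
\|e^{-itH}P_{ac}(H)f\|_\infty\les\|e^{-it(-\Delta)^m}W_\pm^*f\|_\infty\les|t|^{-n/(2m)}\|W_\pm^*f\|_1\les|t|^{-n/(2m)}\|f\|_1 ,
$$
which is the first estimate.

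For the second estimate I would apply the intertwining identity with $f(\lambda)=\lambda^{\frac{n(m-1)}{2m}}e^{-it\lambda}$, and use $\big((-\Delta)^m\big)^{\frac{n(m-1)}{2m}}=(-\Delta)^{\frac{n(m-1)}{2}}$, to obtain
$$
H^{\frac{n(m-1)}{2m}}e^{-itH}P_{ac}(H)=W_\pm\,(-\Delta)^{\frac{n(m-1)}{2}}e^{-it(-\Delta)^m}\,W_\pm^* .
$$
Here the spectral multiplier is unbounded, so a small amount of care is required: I would first establish this identity on a dense class (e.g.\ Schwartz functions, or $L^1\cap L^2$), relying on the bound $\|(-\Delta)^{n(m-1)/2}e^{-it(-\Delta)^m}g\|_\infty\les|t|^{-n/2}\|g\|_1$ of \cite{EGG}, which is precisely the statement that $(-\Delta)^{n(m-1)/2}e^{-it(-\Delta)^m}$ extends to a bounded map $L^1\to L^\infty$. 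Since the right-hand side is then a bounded map $L^1\to L^\infty$ sandwiched between bounded operators $W_\pm^*:L^1\to L^1$ and $W_\pm:L^\infty\to L^\infty$, the identity and estimate extend to all of $L^1$ by density, yielding $\|H^{n(m-1)/(2m)}e^{-itH}P_{ac}(H)f\|_\infty\les|t|^{-n/2}\|f\|_1$ by the same three-line chain as above.

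I do not expect any genuine obstacle here, since all the analytic content is already contained in Corollary~\ref{thm:full} and in the free dispersive bounds. The only two points needing a word of justification are the endpoint ($p=1$ and $p=\infty$) boundedness of the adjoint wave operators $W_\pm^*$ and the legitimacy of using the intertwining identity with the unbounded spectral multiplier $\lambda^{n(m-1)/(2m)}$; the former is handled by observing that $W_\pm^*$ admits the same type of stationary representation as $W_\pm$, and the latter by the density argument just indicated together with the $L^1\to L^\infty$ mapping property of the weighted free propagator.
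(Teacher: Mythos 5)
Your proposal is correct and follows essentially the same route as the paper, which obtains the corollary precisely by combining the intertwining identity with the $L^p$ bounds of Corollary~\ref{thm:full} (hence $W_\pm:L^\infty\to L^\infty$ and $W_\pm^*:L^1\to L^1$ by duality) and the known $L^1\to L^\infty$ dispersive bounds for $e^{-it(-\Delta)^m}$ and $(-\Delta)^{\frac{n(m-1)}{2}}e^{-it(-\Delta)^m}$. Your extra remarks on the endpoint duality for $W_\pm^*$ and the density argument for the unbounded multiplier $\lambda^{\frac{n(m-1)}{2m}}$ are sound and only make explicit what the paper leaves implicit.
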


The $L^p$ boundedness of the wave operators in the higher order $m>1$ case has only recently been studied.  The first result was in the case of $m=2$ and $n=3$ by Goldberg and the second author, \cite{GG4wave}.  Here the wave operators were shown to be bounded on $1<p<\infty$. More recently in \cite{EGWaveOp},  we  proved $1\leq p\leq \infty$ boundedness in the cases when $n>2m$ and $m>1$  for small potentials or $n$ odd for large potentials.  In \cite{MWY},  Mizutani, Wan, and Yao  considered the case of $m=4$ and $n=1$ showing that the wave operators are bounded when $1<p<\infty$, but not when $p=1,\infty$, where weaker estimates involving the Hardy space or BMO were proven.  This recent work on higher order, $m>1$, Schr\"odinger operators has roots in the work of Feng, Soffer, Wu and Yao \cite{soffernew} which considered time decay estimates between weighted $L^2$ spaces.

The paper is organized as follows.  In Section~\ref{sec:low} we collect facts about resolvent operators needed to prove Theorem~\ref{thm:main low}.  In Section~\ref{sec:prop pf}  we establish the main technical tool, Proposition~\ref{lem:low tail low d}.  In Section~\ref{sec:Minv} we prove several technical lemmas which, in particular, show that Proposition~\ref{lem:low tail low d} implies Theorem~\ref{thm:main low}.

\section{Resolvent Expansions}\label{sec:low}

In this section we lay the groundwork to prove the low energy result, Theorem~\ref{thm:main low}.    It is convenient to use a change of variables to respresent $W_{low,k}$ as  
$$
	\frac{m}{\pi i}\int_{0}^\infty \chi(\lambda) \lambda^{2m-1} (\mR_0^+(\lambda^{2m}) V)^{k } \mR_V^+(\lambda^{2m})  (V\mR_0^+(\lambda^{2m}) )^k V   [\mR_0^+(\lambda^{2m})-\mR_0^-(\lambda^{2m})]  \, d\lambda
$$
We begin by using the symmetric resolvent identity on the perturbed resolvent $\mR_V^+(\lambda^{2m})$. With $v=|V|^{\f12}$, $U(x)=1$ if $V(x)\geq 0$ and $U(x)=-1$ if $V(x)<0$, we define $M^+(\lambda)=U+v\mR_0^+(\lambda^{2m})v$. Recall that $M^+$ is invertible on $L^2$   in a sufficiently small neighborhood of $\lambda=0$ provided that zero is a regular point of the spectrum. 
Using the symmetric resolvent identity, one has
$$
\mR_V^+(\lambda^{2m})V=\mR_0^+(\lambda^{2m})vM^+(\lambda)^{-1}v.
$$
We select the cut-off $\chi$ to be supported in this neighborhood.  Therefore, we have
$$
W_{low,k}=	\frac{m}{\pi i}\int_{0}^\infty \chi(\lambda) \lambda^{2m-1} \mR_0^+(\lambda^{2m}) v\Gamma_k(\lambda) v   [\mR_0^+(\lambda^{2m})-\mR_0^-(\lambda^{2m})]  \, d\lambda,
$$
where $\Gamma_0(\lambda):= M^+(\lambda)^{-1} $ and for $k\geq 1$
\be\label{Gammalambda}
\Gamma_k(\lambda):= Uv\mR_0^+(\lambda^{2m}) \big(V\mR_0^+(\lambda^{2m})\big)^{k-1} vM^+(\lambda)^{-1}v \big( \mR_0^+(\lambda^{2m})V\big)^{k-1} \mR_0^+(\lambda^{2m})vU.
\ee
To state the main result of this section, we define an operator $T:L^2\to L^2$ with integral kernel $T(x,y)$ to be absolutely bounded if the operator with kernel $|T(x,y)|$ is bounded on $L^2$. 
\begin{prop}\label{lem:low tail low d} Fix  $n>2m\geq 2$  and let $\Gamma$ be a $\lambda $ dependent absolutely bounded operator. Let 
$$
\widetilde \Gamma (x,y):= \sup_{0<\lambda <\lambda_0}\Big[|\Gamma(\lambda)(x,y)|+ \sup_{1\leq k\leq  \lceil \tfrac{n}2\rceil +1  } \big|\lambda^{k-1} \partial_\lambda^k \Gamma(\lambda)(x,y)\big| \Big].
$$
For $2m<n<4m$ assume that $\widetilde \Gamma $ is bounded on $L^2$, and for $n\geq 4m$  assume that $\widetilde\Gamma $ satisfies   \be\label{eq:tildegamma}
\widetilde \Gamma (x,y)  \les \la x\ra^{-\frac{n}2-}\la y\ra^{-\frac{n}2-}.
\ee Then the operator with kernel 
	\be\label{Kdef}
	K(x,y)=\int_0^\infty \chi(\lambda) \lambda^{2m-1} \big[\mR_0^+(\lambda^{2m}) v \Gamma(\lambda)  v [\mR_0^+(\lambda^{2m}) -\mR_0^-(\lambda^{2m})]\big](x,y)   d\lambda 
	\ee
	is bounded on $L^p$ for $1\leq p\leq \infty$ provided that $\beta>n$.
\end{prop}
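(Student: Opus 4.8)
The plan is to rewrite \eqref{Kdef} as an explicit oscillatory integral, extract a pointwise majorant for $K(x,y)$, and verify that this majorant defines an operator bounded on $L^1$ and on $L^\infty$; interpolation then gives the full range $1\le p\le\infty$. First I would insert the expansions of the free resolvents from Section~\ref{sec:low}. Partial fractions give $\mR_0^\pm(\lambda^{2m})=\lambda^{2-2m}\sum_{j=0}^{m-1}c_j\,R_0(\zeta^j\lambda^2)$ with $R_0(z)=(-\Delta-z)^{-1}$ and $\zeta=e^{2\pi i/m}$; only the $j=0$ term (with the limit from above) fails to be smooth across the positive axis, the remaining terms having kernels with exponential decay $e^{-c\lambda|x-y|}$. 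In particular the difference reduces to the $j=0$ term, $[\mR_0^+(\lambda^{2m})-\mR_0^-(\lambda^{2m})](z_2,y)=c\,\lambda^{n-2m}\int_{S^{n-1}}e^{i\lambda\omega\cdot(z_2-y)}\,d\omega$, a rescaled Fourier transform of surface measure, entire in $\lambda$ and $O(\lambda^{n-2m})$ near $\lambda=0$. For the leftmost resolvent I would split $\mR_0^+(\lambda^{2m})(x,z_1)$ with a smooth cutoff at $\lambda|x-z_1|\sim1$ into a low part majorized by $|x-z_1|^{2m-n}$ (each $\partial_\lambda$ losing at most $\lambda^{-1}$), a high part $\lambda^{\frac{n+1}{2}-2m}|x-z_1|^{-\frac{n-1}{2}}e^{i\lambda|x-z_1|}\varpi_+(\lambda|x-z_1|)$ with $\varpi_+$ a zero-order symbol, and the harmless exponentially localized pieces. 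After Fubini, $K(x,y)$ becomes a finite sum of terms
\[
\int\!\!\int v(z_1)v(z_2)\Big[\int_0^{\infty}\chi(\lambda)\,\lambda^{n-1}\,\mathcal A(\lambda)(x,z_1)\,\Gamma(\lambda)(z_1,z_2)\,e^{i\lambda\omega\cdot(z_2-y)}\,d\lambda\Big]\,dz_1\,dz_2\;d\omega,
\]
with $\mathcal A$ the low or high part of $\mR_0^+(\lambda^{2m})(x,z_1)$ (its partial-fraction powers of $\lambda$ absorbed into $\mathcal A$), leaving only finitely many phase/regime combinations.

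On each term I would integrate by parts in $\lambda$. When $\mathcal A$ is the high part, the total phase is $\lambda\big(|x-z_1|+\omega\cdot(z_2-y)\big)$, and away from the set where it is small it is $\gtrsim|x-z_1|+|z_2-y|$; repeated integration by parts turns that into a factor $\langle|x-z_1|+|z_2-y|\rangle^{-N}$, which renders these contributions harmless. The delicate accounting here is the cost of each $\partial_\lambda$: hitting $\chi\lambda^{n-1}$ or a symbol $\varpi_\pm(\lambda|\cdot|)$ costs $\lambda^{-1}$ (and the cutoff-derivative terms are supported where $\lambda$ is bounded below), while $\partial_\lambda^k$ falling on $\Gamma$ costs $\lambda^{-(k-1)}$ and is controlled by $\widetilde\Gamma$ — this is exactly why the hypothesis involves the weights $\lambda^{k-1}\partial_\lambda^k\Gamma$ and is stated for $k$ up to $\lceil n/2\rceil+1$, the maximal number of derivatives the argument needs; the positive power of $\lambda$ carried by the difference of resolvents is what keeps the $\lambda$-integral convergent at $\lambda=0$.

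The main obstacle is the resonant configuration, where $|x-z_1|+\omega\cdot(z_2-y)\approx0$ (which forces $|z_2-y|\gtrsim|x-z_1|$) and integration by parts yields nothing. There I would keep only the trivial $\lambda$-bound and instead extract decay from the spherical average: using $\int_{S^{n-1}}F(\omega\cdot(z_2-y))\,d\omega=c\int_{-1}^{1}F(|z_2-y|s)(1-s^2)^{\frac{n-3}{2}}\,ds$ and performing the $\lambda$-integral first (which, by the $\lambda$-smoothness of the amplitude, produces a rapidly decaying function of $|x-z_1|+|z_2-y|s$), the resulting majorant for $K(x,y)$ is controlled by
\[
\int\!\!\int\frac{\langle|x-z_1|-|z_2-y|\rangle^{-N}}{|x-z_1|^{\frac{n-1}{2}}\,|z_2-y|^{\frac{n-1}{2}}}\,v(z_1)\,\widetilde\Gamma(z_1,z_2)\,v(z_2)\,dz_1\,dz_2
\]
plus easier, fully non-oscillatory contributions, in which the $|x-z_1|^{2m-n}$ local singularities are treated by direct estimates and the $\lambda$-integral supplies a factor $\max(|x-z_1|,|z_2-y|)^{-n}$ giving decay at spatial infinity — and it is precisely this non-oscillatory part that forces the use of \eqref{eq:tildegamma}, rather than mere $L^2$-boundedness of $\widetilde\Gamma$, when $n\ge4m$.

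It remains to verify $\sup_x\int|K(x,y)|\,dy<\infty$ and $\sup_y\int|K(x,y)|\,dx<\infty$ and interpolate. The radial estimate $\int_0^\infty\rho^{\frac{n-1}{2}}\langle r-\rho\rangle^{-N}\,d\rho\lesssim\langle r\rangle^{\frac{n-1}{2}}$ (valid for $N$ large, again using $\lceil n/2\rceil+1$) reduces the oscillatory majorant to the finiteness of $\sup_x\int\!\int\min\big(1,|x-z_1|^{-\frac{n-1}{2}}\big)v(z_1)\widetilde\Gamma(z_1,z_2)v(z_2)\,dz_1dz_2$, which holds because $x\mapsto\min(1,|x-z_1|^{-\frac{n-1}{2}})v(z_1)$ lies in $L^2_{z_1}$ uniformly in $x$ — here $\beta>n$ is exactly what gives $v\in L^2$ — together with the $L^2$-boundedness (for $2m<n<4m$) or the pointwise decay \eqref{eq:tildegamma} (for $n\ge4m$) of $\widetilde\Gamma$; the non-oscillatory majorant is handled the same way, via the factorization through $L^2\xrightarrow{\widetilde\Gamma}L^2$ when $2m<n<4m$ (which works because $|x-z_1|^{2m-n}\mathbf{1}_{|x-z_1|\lesssim1}v(z_1)\in L^2_{z_1}$ precisely when $n<4m$) and via \eqref{eq:tildegamma} when $n\ge4m$. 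The genuinely subtle point — and where the even-dimensional case is hardest — is the resonant analysis together with the sharp counting of $\lambda$-derivatives; everything else is organizing the finitely many phase/regime combinations and the dichotomy $2m<n<4m$ versus $n\ge4m$.
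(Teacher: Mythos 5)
There is a genuine gap, and it sits exactly at the point you yourself flag as delicate: the regime $|z_2-y|\gg \la |x-z_1|\ra$ (the paper's $K_3$). Your argument there rests on two claims that the hypotheses do not support. First, you assert that performing the $\lambda$-integral first "produces a rapidly decaying function" of the phase variable; but the amplitude contains $\Gamma(\lambda)$, for which only $N=\lceil n/2\rceil+1$ derivatives are controlled, each at the cost of a power $\lambda^{-(k-1)}$, so no rapid decay is available — the decay is capped at $N$, and each integration by parts also spends a power of $\lambda$ that must be absorbed by the $\lambda^{n-1}$ vanishing and the lower limit $\lambda\gtrsim |z_2-y|^{-1}$. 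If you do the honest bookkeeping with $r_1=|x-z_1|\ll r_2=|z_2-y|$, $\lambda r_2\gtrsim1$: the gain is $r_2^{-N}$, the surface-measure decay gives $(\lambda r_2)^{-\frac{n-1}2}$, and the worst distribution of derivatives (all falling on the $r_2$-dependent factors or on $\lambda^{n-1}$, none on $\Gamma$) leaves $\int_{1/r_2}^{1}\lambda^{n-1-N-\frac{n-1}2}d\lambda$, which contributes a compensating positive power of $r_2$; the net result is exactly $r_2^{-n}$, in both parities of $n$. Second, your "easier, fully non-oscillatory" piece ($\lambda r_2\lesssim1$) is claimed to carry $\max(r_1,r_2)^{-n}$, i.e. again only $r_2^{-n}$ when $r_2\gg r_1$. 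But $r_2^{-n}$ fails the Schur test: $\int_{|z_2-y|\gtrsim1}|z_2-y|^{-n}\,dy$ diverges logarithmically, so neither piece is admissible as you have bounded it. Your majorant $\la r_1-r_2\ra^{-N}r_1^{-\frac{n-1}2}r_2^{-\frac{n-1}2}$ would indeed suffice, but it silently assumes the full gain $r_2^{-N}$ with no loss in $\lambda$, which the limited smoothness of $\Gamma$ does not permit.

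What is missing is a cancellation device that supplies the extra half power (or more) of decay in $r_2$. The paper's proof does this by writing, in the region $r_2\gg\la r_1\ra$, $\mR_0^+(\lambda^{2m})\Gamma(\lambda)=\mR_0^+(0)\Gamma(0)+E(\lambda)\Gamma(0)+\mR_0^+(\lambda^{2m})[\Gamma(\lambda)-\Gamma(0)]$. The leading term has a $\lambda$-independent amplitude, so its $\lambda$-integral is (a multiple of) the kernel of $\chi(\sqrt{-\Delta})$, which is Schwartz in $r_2$ — here unlimited integrations by parts are legitimate because $\Gamma$ no longer appears with $\lambda$-dependence. The two error terms vanish to an extra order: $|\Gamma(\lambda)-\Gamma(0)|\les\lambda\widetilde\Gamma$ by the mean value theorem (this is where the $k=1$ hypothesis is used), and $|E(\lambda)(r_1)|\les\lambda r_1^{2m-n+1}$ by Corollary~\ref{cor:E}; that extra $O(\lambda)$ converts, through the same integration-by-parts counting, into decay $r_2^{-n-\frac12}$ or better, which is then admissible (Lemmas~\ref{lem:admissible} and \ref{lem:admissible2}). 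Your outline for the remaining regions ($r_1,r_2\les1$; $r_1\approx r_2\gg1$; $r_1\gg\la r_2\ra$) and your use of $L^2$-boundedness of $\widetilde\Gamma$ versus the pointwise bound \eqref{eq:tildegamma} for the $2m<n<4m$ versus $n\ge4m$ dichotomy are in the right spirit, but without the zero-energy subtraction (or some substitute producing extra vanishing in $\lambda$) the estimate in the region $r_2\gg\la r_1\ra$ does not close.
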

Note that  Theorem~\ref{thm:main low} follows from this proposition and the following 
\begin{lemma}\label{lem:Gamma}
 Fix  $n>2m\geq 2$. Assume that $|V(x)|\les \la x\ra^{-\beta}$, where  $\beta>n+4$ when $n$ is odd and $\beta>n+3$ when $n$ is even.  Also assume that zero is a regular point of the spectrum of $H$.
 Then the operator $\Gamma_k(\lambda) $ defined in \eqref{Gammalambda} satisfies the hypothesis of Proposition~\ref{lem:low tail low d} for all $k$ when $2m<n<4m$ and for all sufficiently large $k$ when $n\geq 4m$.
\end{lemma}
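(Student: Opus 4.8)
The plan is to show that $\Gamma_k(\lambda)$ from \eqref{Gammalambda} satisfies both hypotheses of Proposition~\ref{lem:low tail low d}: absolute boundedness on $L^2$ of $\widetilde\Gamma_k$ when $2m<n<4m$, and the pointwise bound $\widetilde\Gamma_k(x,y)\lesssim\la x\ra^{-\frac n2-}\la y\ra^{-\frac n2-}$ when $n\ge4m$. Everything rests on two ingredients: (i) pointwise control, for $0<\lambda<\lambda_0$, of $\mR_0^+(\lambda^{2m})(x,y)$ and of $\lambda^{j-1}\partial_\lambda^j\mR_0^+(\lambda^{2m})(x,y)$ for $1\le j\le\lceil n/2\rceil+1$; and (ii) absolute boundedness on $L^2$ of $M^+(\lambda)^{-1}$ and of $\lambda^{j-1}\partial_\lambda^jM^+(\lambda)^{-1}$ for the same range of $j$. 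Granting these, one expands $\partial_\lambda^j\Gamma_k(\lambda)$ by the Leibniz rule, distributing the at most $\lceil n/2\rceil+1$ derivatives among the $2k$ resolvent factors and the single $M^+(\lambda)^{-1}$, and bounds each resulting term as a composition; the interlaced factors $v\lesssim\la\cdot\ra^{-\beta/2}$ and $V\lesssim\la\cdot\ra^{-\beta}$ furnish the decay that absorbs the polynomial growth in $|x-y|$ produced by differentiating the resolvents.

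For ingredient (i) I would start from $\mR_0(\lambda^{2m})=\tfrac1{m\lambda^{2m-2}}\sum_{\ell=0}^{m-1}\omega_\ell R_0(\omega_\ell\lambda^2)$, with $R_0$ the free Laplacian resolvent and $\omega_\ell$ the $m$-th roots of unity; the terms with $\omega_\ell$ non-real contribute kernels decaying exponentially in $\lambda|x-y|$, so one reduces to $R_0^\pm(\lambda^2)$, whose small- and large-argument behaviour is classical (with a logarithm entering only at order $\lambda^{n-2}$ when $n$ is even). Combining these with the $\lambda^{j-1}$ weight one finds that $\lambda^{j-1}\partial_\lambda^j\mR_0^\pm(\lambda^{2m})(x,y)$ keeps the local singularity $|x-y|^{2m-n}$ near the diagonal (the differentiated kernels being no worse there) and grows at infinity at worst like $|x-y|^{\,j-\frac{n-1}2}$, from the regime $\lambda|x-y|\gtrsim1$. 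For $j\le\lceil n/2\rceil+1$ this exponent is at most $2\lceil n/2\rceil+3-n$ — i.e.\ $2$ when $n$ is odd and $\tfrac32$ when $n$ is even — and this is exactly what pins down the decay thresholds: in the worst case all $\lceil n/2\rceil+1$ derivatives land on the outermost free resolvent in $\Gamma_k$, and the adjacent factor $v\lesssim\la\cdot\ra^{-\beta/2}$ must absorb this growth while leaving the $\la\cdot\ra^{-n/2-}$ demanded in the statement, which forces $\beta>2\lceil n/2\rceil+3$, i.e.\ $\beta>n+4$ for $n$ odd and $\beta>n+3$ for $n$ even.

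For ingredient (ii), regularity of zero makes $M^+(0)=U+v\mR_0^+(0)v$ invertible on $L^2$; writing $M^+(\lambda)=M^+(0)+v\bigl(\mR_0^+(\lambda^{2m})-\mR_0^+(0)\bigr)v$ with the second term $o(1)$ in operator norm, a Neumann series gives invertibility of $M^+(\lambda)$ for $\lambda<\lambda_0$, and differentiating $M^+(\lambda)M^+(\lambda)^{-1}=I$ writes $\partial_\lambda^jM^+(\lambda)^{-1}$ as a finite sum of products of $M^+(\lambda)^{-1}$ with factors $v\bigl(\partial_\lambda^i\mR_0^+(\lambda^{2m})\bigr)v$, $i\le j$. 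By ingredient (i) the kernels $\lambda^{i-1}v(x)\,\partial_\lambda^i\mR_0^+(\lambda^{2m})(x,y)\,v(y)$ are dominated by $\la x\ra^{-\beta/2}\bigl(|x-y|^{2m-n}+|x-y|^{2}\bigr)\la y\ra^{-\beta/2}$, which — since $2m-n>-n$ so the local singularity is square-integrable when $2m<n<4m$, and the polynomial tail is killed by $\beta>2\lceil n/2\rceil+3$ — is Hilbert--Schmidt, hence absolutely bounded on $L^2$; one concludes that $\widetilde{M^{-1}}(x,y):=\sup_\lambda\bigl[|M^+(\lambda)^{-1}(x,y)|+\sup_{1\le j\le\lceil n/2\rceil+1}|\lambda^{j-1}\partial_\lambda^jM^+(\lambda)^{-1}(x,y)|\bigr]$ is absolutely bounded on $L^2$. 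This is essentially the content of the relevant resolvent lemmas in \cite{soffernew,EGWaveOp}; only the $\lambda$-weighted derivatives up to the stated order need checking, and for $n\ge4m$ one uses instead weighted-$L^2$ (Schur-type) bounds for $v\,\partial_\lambda^i\mR_0^+(\lambda^{2m})\,v$, whose local singularity $|x-y|^{2m-n}$ is still integrable on $\R^n$.

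Finally I would assemble: a generic term in the Leibniz expansion of $\partial_\lambda^j\Gamma_k(\lambda)$ is a composition whose factors are either a $\lambda$-weighted derivative of some $M^+(\lambda)^{-1}$ (absolutely bounded on $L^2$ by (ii)) or a block $v\,(\partial_\lambda^{a_0}\mR_0^+)(V\partial_\lambda^{a_1}\mR_0^+)\cdots(V\partial_\lambda^{a_r}\mR_0^+)\,v$ (possibly ending in $U$, with $v$ replaced by $V$ at internal junctions), with $a_0+\cdots+a_r\le\lceil n/2\rceil+1$. When $2m<n<4m$ the local singularity $|x-y|^{2m-n}$ lies in $L^2_{\rm loc}$, so each block is Hilbert--Schmidt once $\beta>2\lceil n/2\rceil+3$, hence absolutely bounded on $L^2$; a composition of absolutely bounded operators is absolutely bounded, giving the first hypothesis for every $k$. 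When $n\ge4m$ the kernel $|x-y|^{2m-n}$ is no longer locally $L^2$, so one iterates: after $k$ convolutions of $|x-y|^{2m-n}$ with fast-decaying potentials interlaced, the composite kernel behaves locally like $|x-y|^{2mk-n}$, which is in $L^2_{\rm loc}$ once $k>n/(4m)$. Taking $k$ that large, the operators $A:=\mR_0^+(V\mR_0^+)^{k-1}v$ and $C:=v(\mR_0^+V)^{k-1}\mR_0^+$ and their $\lambda$-weighted derivatives satisfy $\sup_x\|A(x,\cdot)\|_2<\infty$ and $\sup_y\|C(\cdot,y)\|_2<\infty$, so that $|\Gamma_k(\lambda)(x,y)|\le v(x)v(y)\|A(x,\cdot)\|_2\,\|M^+(\lambda)^{-1}\|\,\|C(\cdot,y)\|_2\lesssim\la x\ra^{-\beta/2}\la y\ra^{-\beta/2}\lesssim\la x\ra^{-\frac n2-}\la y\ra^{-\frac n2-}$, and similarly for the derivative terms, giving the second hypothesis. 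The main obstacle is exactly this $n\ge4m$ bookkeeping: one must simultaneously control how large $k$ has to be so that every differentiated, iterated resolvent composition is locally square-integrable, while ensuring the interlaced decay — $\beta>n$ in Proposition~\ref{lem:low tail low d}, sharpened here to $\beta>n+4$ and $\beta>n+3$ — still suppresses the tails and yields the full $\la\cdot\ra^{-n/2-}$ gain; the even-dimensional logarithms enter at order $\lambda^2$ and cost only a power of $\log\la x-y\ra$, so they are a minor nuisance.
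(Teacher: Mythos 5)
Your proposal follows essentially the same route as the paper: pointwise bounds on the $\lambda$-weighted derivatives of the free resolvent (the paper's $R_j$ kernels), inversion of $M^+(\lambda)$ via regularity of zero and a Neumann series with the derivative formula for $[M^+(\lambda)]^{-1}$, Hilbert--Schmidt/composition bounds when $2m<n<4m$, and for $n\ge 4m$ iterating the resolvents to tame the local singularity $|x-y|^{2m-n}$ while the decay of $v$ absorbs the $\la \cdot\ra^{2}$ (odd $n$) or $\la \cdot\ra^{3/2}$ (even $n$) growth of the differentiated kernels, which is exactly how the thresholds $\beta>n+4$ and $\beta>n+3$ arise in the paper's argument. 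The only small imprecision is in your final assembly: for the differentiated blocks $\sup_x\|A(x,\cdot)\|_2$ is not finite but grows like $\la x\ra^{2}$ (resp.\ $\la x\ra^{3/2}$), which is harmless since, as you correctly note earlier, the adjacent factor $v\les\la\cdot\ra^{-\beta/2}$ absorbs this growth and still leaves the required $\la\cdot\ra^{-\frac n2-}$ decay.
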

We prove Propostion~\ref{lem:low tail low d} in Section~\ref{sec:prop pf}, and provide the argument for Lemma~\ref{lem:Gamma} in Section~\ref{sec:Minv}.  
To prove these results  we need the following representations of  the free resolvent given in Lemmas~3.2 and 6.2 in \cite{EGWaveOp}.

\begin{lemma}\label{prop:F} Let $n>2m\geq 2$. Then,   we have  
	$$
	\mR_0^+(\lambda^{2m})(y,u)
	=  \frac{e^{i\lambda|y-u|}}{|y-u|^{n-2m}} F(\lambda |y-u| ).$$
When $r\gtrsim 1$, we have  $|F^{(N)}(r)|\les  r^{\frac{n+1}2 -2m-N}$ for all $N$. When $r\ll 1$ and $n$ is odd, we have $|F^{(N)}(r)|\les 1 $ for all $N$. When $r\ll 1$ and $n$ is even, we have 
	$$ |F^{(N)}(r)|\les \left\{ \begin{array}{ll} 1&N=0,1,\ldots,2m-1,\\ |\log(r)|& N=2m,\\ r^{2m-N},   & N>2m.\end{array}\right. 
	$$  
\end{lemma}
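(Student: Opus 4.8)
\textbf{Plan of proof for Lemma~\ref{prop:F}.}

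The plan is to derive both the closed form and the stated derivative bounds directly from the well-known representation of the free resolvent of $-\Delta$ in terms of Hankel (Bessel) functions, together with the factorization $(-\Delta)^m-\lambda^{2m}=\prod_{\ell=0}^{2m-1}(-\Delta-\w^\ell\lambda^2)$ where $\w=e^{\pi i/m}$. First I would recall that in $\R^n$,
$$
(-\Delta-z^2)^{-1}(y,u)=\frac{c_n}{|y-u|^{n-2}}\,G_n(z|y-u|),\qquad \Im z\ge 0,
$$
where $G_n(\zeta)=\zeta^{\frac{n-2}{2}}K_{\frac{n-2}{2}}(-i\zeta)$ up to constants, an entire-in-$\zeta^2$ times (for odd $n$) a polynomial in $1/\zeta$, and (for even $n$) carrying a single $\log\zeta$ branch; the key analytic input is the standard asymptotics $G_n(\zeta)\sim e^{i\zeta}\zeta^{\frac{n-3}{2}}$ as $|\zeta|\to\infty$ in the upper half plane, and $G_n(\zeta)=\zeta^{2-n}(c+o(1))$ as $\zeta\to 0$ for $n\ge 3$, with the even-dimensional logarithm appearing precisely at the $\zeta^0$ order. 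Using partial fractions, $(-\Delta)^m-\lambda^{2m})^{-1}=\frac{1}{2m\lambda^{2m-2}}\sum_{\ell=0}^{2m-1}\w^{\ell}(-\Delta-\w^\ell\lambda^2)^{-1}$, and writing $r=\lambda|y-u|$ I would collect the $2m$ pieces to define
$$
F(r)=\frac{c_n}{2m}\,r^{2m-n}\,e^{-ir}\sum_{\ell=0}^{2m-1}\w^{\ell(n/2)}\,G_n(\w^{\ell/2}r),
$$
which, after factoring out $e^{i\lambda|y-u|}=e^{ir}$ as in the statement, is exactly the claimed $\mR_0^+(\lambda^{2m})(y,u)=\frac{e^{i\lambda|y-u|}}{|y-u|^{n-2m}}F(\lambda|y-u|)$. (Only the branches with $\Im(\w^{\ell/2}\lambda)\ge 0$ matter for the `+' boundary value, and $\mR_0^+$ is obtained as the limit from $\Im\lambda>0$; I would record this so the $e^{i\lambda r}$ decay is genuine.)

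Next, for the \textbf{large-$r$ bounds}, I would insert the classical asymptotic expansion of $K_\nu$: for $|\zeta|\gtrsim 1$ in the closed upper half plane, $G_n(\zeta)=e^{i\zeta}\sum_{j=0}^{N}a_j\zeta^{\frac{n-3}{2}-j}+O(\zeta^{\frac{n-3}{2}-N-1})$, and this expansion may be differentiated termwise. Substituting into the sum over $\ell$ and multiplying by $r^{2m-n}e^{-ir}$, the leading exponential of each summand is $e^{i(\w^{\ell/2}-1)r}$; the $\ell=0$ term contributes $r^{2m-n}\cdot r^{\frac{n-3}{2}}=r^{\frac{n+1}{2}-2m-0}$ after the $e^{-ir}$ cancels, while every $\ell\ge 1$ term carries $e^{i(\w^{\ell/2}-1)r}$ with $\Im(\w^{\ell/2}-1)>0$, hence is exponentially small and contributes nothing to the polynomial bound. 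Each $r$-derivative either lowers a power of $r$ by one (on the algebraic factors) or brings down a bounded constant (on $e^{-ir}$) or an exponentially-decaying factor (on the $e^{i(\w^{\ell/2}-1)r}$), so $|F^{(N)}(r)|\les r^{\frac{n+1}{2}-2m-N}$ for all $N$, as claimed. This is routine once the asymptotic expansion and the half-plane sign condition are in hand.

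Finally, for the \textbf{small-$r$ bounds} I would split into parity. When $n$ is odd, $G_n(\zeta)$ is (a constant times) $\zeta^{2-n}$ times a polynomial in $\zeta^2$ plus (a constant times) $e^{i\zeta}$ times a polynomial in $1/\zeta$ of degree $\le\frac{n-3}{2}$ — more precisely $(-\Delta-z^2)^{-1}(y,u)$ is an explicit finite sum $|y-u|^{2-n}P(z|y-u|)+e^{iz|y-u|}|y-u|^{2-n}Q(1/(z|y-u|))$ with $P,Q$ polynomials — so after forming the $\sum_\ell$ and multiplying by $r^{2m-n}e^{-ir}$, the would-be negative powers $r^{2m-n}\cdot r^{2-n}=r^{2m+2-2n}$ cancel across the $2m$ branches exactly as in the $z^{2m-2}$ denominator of the partial fraction expansion (the $\sum_\ell \w^{\ell j}$ vanishes unless $2m\mid j$, wiping out all low-order terms until the surviving power is $\ge 0$); what remains is smooth in $r$ near $0$, giving $|F^{(N)}(r)|\les 1$ for all $N$. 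When $n$ is even, the same cancellation structure holds for the purely algebraic terms, but $G_n$ additionally contains a term of the form $\zeta^{\frac{n-2}{2}}\cdot\zeta^{-\frac{n-2}{2}}\log\zeta\cdot(\text{poly in }\zeta^2)=(\log\zeta)(\text{poly in }\zeta^2)$; after the $\sum_\ell$ and the $r^{2m-n}$ prefactor, the lowest surviving power of $r$ multiplying a $\log r$ is $r^{2m}$ (again by the $2m$-fold cancellation killing everything below order $r^{2m}$ relative to $r^{2-n}$), so $F$ behaves like (smooth) $+\ r^{2m}\log r\cdot(\text{smooth})$ near $0$. Differentiating: for $N\le 2m-1$, $F^{(N)}$ is bounded; the $2m$-th derivative of $r^{2m}\log r$ produces the $\log r$, giving $|F^{(2m)}(r)|\les|\log r|$; and for $N>2m$, $\partial_r^N(r^{2m}\log r)\sim r^{2m-N}$, giving $|F^{(N)}(r)|\les r^{2m-N}$. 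This reproduces the table in the statement.

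\textbf{Main obstacle.} The bookkeeping in the small-$r$ even-dimensional case is the delicate point: one must track where the logarithm first enters in the Bessel expansion and verify that the $2m$-fold sum $\sum_{\ell}\w^{\ell j}$ annihilates exactly the terms of order below $r^{2m}$ (matched to the $\lambda^{2m-2}$ in the partial-fraction denominator), so that no spurious negative powers of $r$ or low-order logarithms survive. Keeping the branch choices $\w^{\ell/2}$ consistent with the `+' limiting absorption (so that each exponential genuinely decays in the regime where it is used) is the other place where care is required; everything else reduces to substituting standard Bessel asymptotics and differentiating termwise.
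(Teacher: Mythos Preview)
The paper does not prove this lemma here; it simply quotes Lemmas~3.2 and 6.2 of \cite{EGWaveOp}. Your plan --- the splitting identity to write $\mR_0^+(\lambda^{2m})$ as a finite sum of Schr\"odinger resolvents, then Hankel-function asymptotics for large $r$ and root-of-unity cancellation for small $r$ --- is exactly the standard route and is what is carried out in that reference, so substantively there is nothing to contrast.

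One concrete error to correct: your factorization $(-\Delta)^m-\lambda^{2m}=\prod_{\ell=0}^{2m-1}(-\Delta-\w^\ell\lambda^2)$ with $\w=e^{\pi i/m}$ is wrong; the right-hand side equals $(-\Delta)^{2m}-\lambda^{4m}$. The correct identity has $m$ (not $2m$) factors, over the $m$-th roots of unity $\w_k=e^{2\pi ik/m}$, giving
\[
\mR_0^+(\lambda^{2m})=\frac{1}{m\lambda^{2m-2}}\sum_{k=0}^{m-1}\w_k\,(-\Delta-\w_k\lambda^2)^{-1},
\]
with the square roots $\w_k^{1/2}=e^{\pi i k/m}$ lying in the closed upper half-plane (as needed for the `$+$' boundary value and for your exponential-decay argument when $k\ge 1$). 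Your cancellation mechanism and your counting of where the even-dimensional logarithm first survives then go through verbatim with $m$ in place of $2m$; the orthogonality $\sum_{k=0}^{m-1}\w_k^{s}=0$ for $m\nmid s$ is what wipes out the would-be negative powers of $r$ in $F$ and pushes the first surviving $\log r$ to the $r^{2m}$ level. After this bookkeeping fix the rest of your outline is sound.
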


These estimates won't suffice for our purposes; we also need to take advantage of cancellation in the difference $\mR_0^+(\lambda^{2m})-\mR_0^-(\lambda^{2m})$. 

\begin{lemma}\label{lem:R0cancel}
Let $n>2m\geq 2$. We have 
$$
[\mR_0^+(\lambda^{2m})-\mR_0^-(\lambda^{2m})](y,u)= \lambda^{n-2m}  \big[ e^{i\lambda |y-u|}F_+(\lambda |y-u|)+e^{-i\lambda |y-u|}F_-(  \lambda |y-u|)\big],
$$
where $  F_{\pm}$ are $C^\infty$ functions on $\R$ satisfying  for all $j\geq 0$, $r\in\R$ 
$$
|\partial_r^j F_\pm(r)|\les \la r\ra^{\frac{1-n}2-j}.
$$
\end{lemma} 
\begin{proof} By the splitting identity, we have
\be\label{splitdiff}[\mR_0^+-\mR_0^-](\lambda^{2m})(y,u)=\frac{1}{m\lambda^{2m-2}}[R_0^+-R_0^-](\lambda^2)(y,u).
\ee
Since, $[R_0^+-R_0^-](\lambda^2)$ is a multiple of the imaginary part of $R_0^+$.  Since this may be expressed as a multiple of $(\frac{\lambda}{|y-u|})^{\frac{n-2}{2}}J_{\frac{n-2}{2}}(\lambda|y-u|)$ with $J_{\frac{n-2}{2}}$ a Bessel function of the first kind, we have  
$$
 [\mR_0^+-\mR_0^-](\lambda^{2m})(y,u)=C_{m,n} \lambda^{2-2m} \lambda^{n-2}\sum_{j=0}^\infty c_{n,j} (\lambda |y-u|)^{2j}=:\lambda^{n-2m}  \widetilde F(\lambda|y-u|),
$$
where $|c_j|\les \tfrac1{j!}$.	This proves that $\widetilde F$ is entire and with bounded derivatives for $|r|\les 1$.  Since $\cos(r) \geq \frac12$ for $|r|\ll 1$ we can write 
$$
\widetilde F(r) \chi(r)= e^{ir} \frac{\chi(r) \widetilde F(r)}{2\cos(r)} + e^{-ir}  \frac{\chi(r) \widetilde F(r)}{2\cos(r)}. 
$$

For $|r|\gtrsim 1$, using the representation (ignoring constants)
$$
[R_0^+-R_0^-](\lambda^2)(y,u)=\Big( \frac{\lambda}{|y-u|}\Big)^{\frac{n}2-1}\big(e^{i\lambda |y-u|} \omega_+(\lambda |y-u|) + e^{-i\lambda |y-u|} \omega_-(\lambda |y-u|)\big),
$$
where $\omega_\pm(r)= \widetilde O(|r|^{-1/2})$, we see that
$$
\widetilde \chi(r) \widetilde F(r)= e^{ir} \widetilde \chi(r) r^{1-\frac{n}2}  \omega_+(r) + e^{-ir} \widetilde \chi(r) r^{1-\frac{n}2}  \omega_-(r)).
$$
This yields the bounds for $|r|\gtrsim 1$ after identifying 
$$
F_\pm(r)= \frac{\chi(r) \widetilde F(r)}{2\cos(r)} +  \widetilde \chi(r) r^{1-\frac{n}2}  \omega_\pm(r).
$$
\end{proof}

\begin{rmk}
The effect of $\lambda$ derivatives on $F(\lambda r)$ and $ F_\pm(\lambda r) $ can be bounded by division by $\lambda$, i.e., for all $N=0,1,2,...,$ and for all $n>2m\geq 2$, we have 
\be\label{Fbounds}
|\partial_\lambda^N F(\lambda r)|\les \lambda^{-N} \la  \lambda r \ra^{\frac{n+1}2 -2m},\, \,\,\, \,\,\, 
|\partial_\lambda^N F_\pm(\lambda r)|\les \lambda^{-N} \la  \lambda r\ra^{\frac{1-n}2}.
\ee
This  is clear for $F_\pm$ and also for $F$ except when $n$ is even, $N\geq 2m$ and $\lambda r\ll 1$, in which case the bound also holds since 
$$
r^{N}|F^{(N)}(\lambda r)|\les r^{N} (\lambda r)^{2m-N-}  \les r^{N} (r\lambda)^{-N}=\lambda^{-N}. 
$$
\end{rmk}
Another corollary of Lemma~\ref{prop:F} is  
\begin{corollary}\label{cor:E} Let $E(\lambda ) 
(r):=\mR_0^+(\lambda^{2m})(r)-\mR_0^+(0)(r)$. Then, for $\lambda r\ll 1$,  we have 
$$
| \partial_\lambda^N E(\lambda)(r)|\les \lambda^{1-N} r^{2m-n+1}, \,\,\,\,N=0,1,2,...
$$
When $\lambda r\gtrsim 1$, we have 
$$| E(\lambda)(r)| \les r^{\frac{1-n}{2} }\lambda^{\frac{n+1}2-2m} +r^{2m-n}, \text{ and}$$
$$|\partial_\lambda^{N}E(\lambda)(r)| \les r^{\frac{1-n}{2}+N}\lambda ^{\frac{n+1}2-2m}, \,\,\,\,N=1,2,...$$  
\end{corollary}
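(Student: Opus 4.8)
The plan is to reduce every estimate to the representation of $\mR_0^+$ and the derivative bounds for $F$ recorded in Lemma~\ref{prop:F}. Writing $r$ for the radial variable and letting $\lambda\to0$ in that representation gives $\mR_0^+(0)(r)=F(0)\,r^{2m-n}$, so that
$$
E(\lambda)(r)=r^{2m-n}\big(e^{i\lambda r}F(\lambda r)-F(0)\big).
$$
For $N\geq1$ the constant $r^{2m-n}F(0)$ is annihilated by $\partial_\lambda^N$, and the Leibniz rule gives
$$
\partial_\lambda^N E(\lambda)(r)=r^{2m-n+N}\,e^{i\lambda r}\sum_{j=0}^N\binom{N}{j}i^{j}F^{(N-j)}(\lambda r),
$$
so that $\big|\partial_\lambda^N E(\lambda)(r)\big|\les r^{2m-n+N}\sum_{l=0}^N\big|F^{(l)}(\lambda r)\big|$ for $N\geq1$. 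It therefore remains only to insert the pointwise bounds for $F^{(l)}$ from Lemma~\ref{prop:F} in the two regimes $\lambda r\gtrsim1$ and $\lambda r\ll1$, treating $N=0$ separately.

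In the high--energy regime $\lambda r\gtrsim1$ one has $|F^{(l)}(s)|\les s^{\frac{n+1}{2}-2m-l}$ for every $l$, so with $s=\lambda r\gtrsim1$ and $l\geq0$ each summand is $\les(\lambda r)^{\frac{n+1}{2}-2m}$; this yields, for $N\geq1$,
$$
\big|\partial_\lambda^N E(\lambda)(r)\big|\les r^{2m-n+N}(\lambda r)^{\frac{n+1}{2}-2m}=r^{\frac{1-n}{2}+N}\lambda^{\frac{n+1}{2}-2m}.
$$
For $N=0$ one estimates the two summands of $E(\lambda)(r)$ directly: $|e^{i\lambda r}F(\lambda r)|\les(\lambda r)^{\frac{n+1}{2}-2m}$ contributes $r^{\frac{1-n}{2}}\lambda^{\frac{n+1}{2}-2m}$, and $|F(0)|\les1$ contributes $r^{2m-n}$, which is the stated two--term bound.

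In the low--energy regime $\lambda r\ll1$ and for $N\geq1$, Lemma~\ref{prop:F} gives $\sum_{l=0}^N|F^{(l)}(\lambda r)|\les1$ whenever $n$ is odd or $N\leq2m-1$, $\les\la\log(\lambda r)\ra$ when $n$ is even and $N=2m$, and $\les(\lambda r)^{2m-N}$ when $n$ is even and $N>2m$. Feeding these into $|\partial_\lambda^N E(\lambda)(r)|\les r^{2m-n+N}\sum_{l=0}^N|F^{(l)}(\lambda r)|$, the claimed bound $\les\lambda^{1-N}r^{2m-n+1}$ follows from the elementary inequalities $(\lambda r)^{N-1}\les1$, $(\lambda r)^{2m-1}\la\log(\lambda r)\ra\les1$, and $(\lambda r)^{2m-1}\les1$ respectively, all of which hold because $\lambda r\ll1$, $N\geq1$ and $2m\geq2$.

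The one step that is not a direct application of Lemma~\ref{prop:F} is the case $N=0$ with $\lambda r\ll1$, which I expect to be the main point: bounding $|e^{i\lambda r}F(\lambda r)|$ and $|F(0)|$ separately would only give $|E(\lambda)(r)|\les r^{2m-n}$, short of the asserted bound by exactly the factor $\lambda r\ll1$. To recover this factor one uses the cancellation built into the definition of $E(\lambda)$, writing
$$
e^{i\lambda r}F(\lambda r)-F(0)=\big(e^{i\lambda r}-1\big)F(\lambda r)+\big(F(\lambda r)-F(0)\big),
$$
and estimating $|e^{i\lambda r}-1|\les\lambda r$, $|F(\lambda r)|\les1$, and $|F(\lambda r)-F(0)|\les\lambda r\,\sup_{[0,\lambda r]}|F'|\les\lambda r$; the last bound is licit because Lemma~\ref{prop:F} controls $F'$ near the origin ($2m\geq2>1$). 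This gives $|E(\lambda)(r)|\les r^{2m-n}\cdot\lambda r=\lambda\,r^{2m-n+1}$, completing the proof. (Equivalently, write $E(\lambda)(r)=\int_0^\lambda\partial_\mu\mR_0^+(\mu^{2m})(r)\,d\mu$ and bound the integrand by $r^{2m-n+1}$ with the same $F,F'$ estimates.) Apart from this, every estimate is a direct consequence of Lemma~\ref{prop:F} and comes with a surplus power of $\lambda r\ll1$ to absorb the logarithms.
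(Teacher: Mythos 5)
Your argument is correct and is essentially the paper's own proof: both rely solely on the representation and derivative bounds of Lemma~\ref{prop:F}, with the worst high-energy case being all derivatives hitting the exponential, the $N=0$, $\lambda r\ll1$ case handled by the cancellation/mean value theorem, and the even-dimensional logarithmic losses absorbed by surplus powers of $\lambda r$ exactly as in the paper's remark. You simply write out in detail the steps the paper leaves as a sketch.
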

\begin{proof} First consider the case  $\lambda r\ll 1$.
For $N=0$, the claim follows from the mean value theorem and Lemma~\ref{prop:F}. For $N\geq 1$, again by Lemma~\ref{prop:F}, we have for odd $n$
$$
\lambda^N |\partial_\lambda^N E(\lambda)|\les \lambda^N r^N r^{2m-n} \les  \lambda r^{2m-n+1}.
$$
The proof for n even is similar, using an adjustment for $N\geq 2m$ as in the remark above.

When $\lambda r\gtrsim 1$, the worst case is when the derivatives hit the exponential, which gives the inequality when $N\geq 1$. When $N=0$ the two summands correspond  to the contributions of 
$\mR_0^+(\lambda^{2m})(r)$ and $\mR_0^+(0)(r)$.
\end{proof}

\section{Proof of Proposition~\ref{lem:low tail low d}}\label{sec:prop pf} 

We say an operator $K$ with integral kernel $K(x,y)$ is admissible if
$$
\sup_{x\in \R^n} \int_{\R^n} |K(x,y)|\, dy+	\sup_{y\in \R^n} \int_{\R^n} |K(x,y)|\, dx<\infty.
$$
By the Schur test, it follows that an operator with admissible kernel is bounded on $L^p(\R^n)$ for all $1\leq p\leq \infty$. We are now ready to prove Proposition~\ref{lem:low tail low d}.

\begin{proof}[Proof of Proposition~\ref{lem:low tail low d}]
	Using the representations in Lemma~\ref{prop:F} and Lemma~\ref{lem:R0cancel} with $r_1=|x-z_1|$ and $r_2:=|z_2-y|$ we see that $K(x,y)$ is the difference of  
 	\be \label{Kdefini}
 	K_\pm(x,y)=\int_{\R^{2n}}  \frac{ v(z_1) v(z_2) }{r_1^{n-2m}   } \int_0^\infty e^{i\lambda (r_1 \pm r_2 )} \chi(\lambda) \lambda^{n-1} \Gamma(\lambda)(z_1,z_2) F(\lambda r_1)F_\pm(\lambda r_2)  d\lambda dz_1 dz_2 .
 	\ee
 	We write
 	$$
 	K(x,y) =: \sum_{j=1}^4 K_{j}(x,y),
 	$$
 	where the integrand in  
 	$K_1$ is  restricted  to the set $r_1,r_2\les 1$, in $K_2$ to the set $r_1 \approx r_2\gg 1 $,
 	in $K_3$ to the set $r_2\gg \la r_1\ra $,  in $K_4$ to the set $r_1 \gg \la r_2\ra$. We define $K_{j,\pm}$ analogously. 
 	
 	Using the bounds of Lemmas~\ref{prop:F} and \ref{lem:R0cancel} for $\lambda r\ll1$,  we bound the contribution of 	$|K_{1,\pm}(x,y)|$ by 
 	$$
 	\int_{\R^{2n}}  \frac{ v(z_1) v(z_2)\chi_{r_1,  r_2\les 1} }{r_1^{n-2m}   }   \widetilde\Gamma (z_1,z_2)   dz_1 dz_2.
 	$$
 	Therefore
 	$$
 	\int  |K_{1,\pm}(x,y)|dx    \les \big\|| \cdot|^{2m-n}\big\|_{L^1(B(0,1))} \|v\|_{L^2}^2 \|\widetilde \Gamma\|_{L^2\to L^2}\les 1,
 	$$
 	uniformly in $y$. Similarly, provided that $2m<n<4m$,
 	$$\int  |K_{1,\pm}(x,y)|dy\les \|\widetilde\Gamma\|_{L^2\to L^2} \|v\|_{L^2} \|v(\cdot) |x-\cdot|^{2m-n}\|_{L^2}\les 1 
 	$$
 	holds uniformly in $x$. When $n\geq 4m$, we use the decay bound \eqref{eq:tildegamma} on $\widetilde \Gamma$ to obtain
 	$$\int  |K_{1,\pm}(x,y)|dy\les \int \la z_1\ra^{-n-}\la z_2 \ra^{-n-} r_1^{2m-n} dz_1dz_2 \les 1,  
 	$$
 	which implies that $K_1$ is admissible. 
 	
 	For $K_2$, we restrict ourself to $K_{2,-}$ since the $+$ sign is easier to handle. We integrate by parts twice in the $\lambda$ integral when $\lambda |r_1-r_2|\gtrsim1$ (using \eqref{Fbounds} and the definition of $\widetilde \Gamma$) and estimate directly when $\lambda |r_1-r_2|\ll1$ to obtain 
 	\begin{align} 
 		\nonumber
 		|K_{2,-}(x,y)|\les \int_{\R^{2n}} \frac{v(z_1)  \widetilde\Gamma (z_1,z_2) v(z_2)\chi_{r_1\approx r_2\gg 1 }}{  r_1^{n-2m}  } \int_0^\infty  \chi(\lambda) \lambda^{n-1} \chi(\lambda|r_1-r_2|)  \la\lambda r_1\ra^{1-2m}    d\lambda dz_1 dz_2 \\ \nonumber
 		+ \int_{\R^{2n}} \frac{v(z_1)  \widetilde\Gamma (z_1,z_2) v(z_2)\chi_{r_1\approx r_2\gg 1 }}{  r_1^{n-2m}  } \int_0^\infty  \frac {\chi(\lambda) \lambda^{n-3} \widetilde\chi(\lambda|r_1-r_2|)   \la\lambda r_1\ra^{1-2m}}{|r_1-r_2|^2}     d\lambda dz_1 dz_2\\
 		\les  \int_{\R^{2n}} \frac{v(z_1)  \widetilde\Gamma (z_1,z_2) v(z_2)\chi_{r_1\approx r_2\gg 1 }}{  r_1^{n-2m}  } \int_0^\infty  \frac {\chi(\lambda) \lambda^{n-1}   \la\lambda r_1\ra^{1-2m}}{\la \lambda (r_1-r_2)\ra^2}     d\lambda dz_1 dz_2. \nonumber
 	\end{align}
 	Therefore, passing to the polar coordinates in $x$ integral (centered at $z_1$) and noting 
 	$1-2m<0$, we have 
 	$$
 	\int|K_{2,-}(x,y)| dx 
 	\les  \int_{\R^{2n}} \int_0^1 \int_{r_1\approx r_2\gg 1}  v(z_1)  \widetilde\Gamma (z_1,z_2) v(z_2)    \frac {  \lambda^{n-2m}   }{\la \lambda (r_1-r_2)\ra^2}      d r_1 d\lambda dz_1 dz_2
 	$$
 	$$  \les  \int_{\R^{2n}} \int_0^1 \int_{\R}  v(z_1)  \widetilde\Gamma (z_1,z_2) v(z_2)    \frac {  \lambda^{n-2m-1}   }{\la  \eta \ra^2}      d \eta  d\lambda dz_1 dz_2\les 1, 
 	$$
 	uniformly in $y$. In the second line we defined $\eta=\lambda (r_1-r_2)$ in the $r_1$ integral and used   $n-2m-1\geq 0$.
 	Since $r_1\approx r_2$, the integral in $y$ can be bounded uniformly in $x$ and hence the contribution of $K_2$ is admissible.  We now consider the contribution of
 	\begin{multline} \label{K4ndef}
 		K_{4,\pm}(x,y)= 
 		\int_{\R^{2n}}  \frac{v(z_1) v(z_2)\chi_{r_1 \gg \la r_2\ra}}{r_1^{n-2m}}  \\  \int_0^\infty  e^{i  \lambda (r_1\pm r_2) } F(  \lambda r_1) \chi(\lambda)   \Gamma(\lambda)(z_1,z_2) \lambda^{n-1}  F_\pm(\lambda r_2)  \ d\lambda dz_1 dz_2 .
 	\end{multline}
 	When $\lambda r_1\les 1$, using \eqref{Fbounds}, we bound $|F_\pm(\lambda r_2)|, |F(\lambda r_1)|\les 1$ and estimate the $\lambda$ integral  by 
 	$r_1^{-n} \widetilde \Gamma(z_1,z_2)$, whose contribution to  $K_4$ is bounded by 
 	$$
 	\int_{\R^{2n}}  \frac{v(z_1) v(z_2) \widetilde \Gamma(z_1,z_2) \chi_{r_1 \gg \la r_2\ra}}{r_1^{n+1}} dz_1 dz_2 .
 	$$
 	Which, by Lemma~\ref{lem:admissible}, is admissible. 
 	
 	When $\lambda r_1 \gtrsim 1$, we integrate by parts $N=\lceil n/2\rceil +1 $ times 
 	(using \eqref{Fbounds}) to obtain the bound
 	$$
 	\frac1{|r_1\pm r_2|^{N}}\int_0^\infty \Big|\partial_\lambda^{N} 
 	\big[F(  \lambda r_1) \widetilde\chi(\lambda r_1)  \chi(\lambda)  \lambda^{n-1} \Gamma(\lambda)(z_1,z_2)   F_\pm(\lambda r_2) \big] \Big| d\lambda 
 	$$  
 	$$
 	\les r_1^{-N} \sum_{0\leq j_1+j_2+j_3+j_4\leq N, \, j_i\geq 0  }  
 	\int_{\frac1{r_1}}^1    \lambda^{\frac{n+1}2-2m-j_1}  r_1^{\frac{n+1}2-2m}   \lambda^{n-1-j_2}  \big|\partial_\lambda^{j_3}\Gamma(\lambda)(z_1,z_2)  \big|\frac{\lambda^{-j_4}}{\la \lambda r_2\ra^{\frac{n-1}2}  } d\lambda 
 	$$
 	$$
 	\les r_1^{ \frac{n+1}2-2m-N }  \widetilde \Gamma(z_1,z_2) \sum_{0\leq j_1+j_2+j_3+j_4\leq N, \, j_i\geq 0  }  
 	\int_{\frac1{r_1}}^1    \lambda^{\frac{3n-1}2-2m-j_1-j_2-j_3-j_4}        d\lambda 
 	$$
 	$$
 	\les r_1^{ \frac{n+1}2-2m-N }  \widetilde \Gamma(z_1,z_2) \int_{\frac1{r_1}}^1    \lambda^{\frac{3n-1}2-2m-N}  d\lambda  \les  r_1^{ \frac{n+1}2-2m-N }  \log(r_1)  \widetilde \Gamma(z_1,z_2).
 	$$
 	In the last inequality we noted that 
 	$\frac{3n-1}2-2m-N\geq -1$, so the $\lambda$ integral is either bounded or grows like $\log (r_1)\les r_1^{0-}$ for $r_1\ll 1$.  Noting that $\lceil \frac{n}{2}\rceil +\frac{n+1}{2}\geq n+\frac{1}{2}$, the contribution of this to \eqref{K4ndef} is 
 	$$
 	\les \int_{\R^{2n}}  \frac{v(z_1) v(z_2)\chi_{r_1 \gg \la r_2\ra}}{r_1^{n+\frac12-}} dz_1 dz_2.
 	$$
 	By Lemma~\ref{lem:admissible}, this  is admissible.

 	We now consider $K_3$, which is the most challenging case. 
 	Using \eqref{splitdiff} in \eqref{Kdef}, we write
 	\begin{multline} \label{K3ndef1}
 		K_3(x,y)=  
 		\int_{\R^{2n}}  \chi_{r_2 \gg \la r_1\ra} v(z_1) v(z_2)\\ 	\int_0^\infty \lambda \chi(\lambda)   \mR_0^+(\lambda^{2m}) (r_1)    \Gamma(\lambda)(z_1,z_2) [R_0^+(\lambda^2)- R_0^-(\lambda^2)](r_2)  \ d\lambda dz_1 dz_2 .
 	\end{multline}
 	We write
 	$$
 	\mR_0^+(\lambda^{2m})=\mR_0^+(0) + [\mR_0^+(\lambda^{2m})-\mR_0^+(0)]=: G_0+E(\lambda),
 	$$
 	$$
 	\Gamma(\lambda)= \Gamma(0)+[\Gamma(\lambda)-\Gamma(0)]=:\Gamma(0)+\Gamma_1(\lambda).
 	$$
 	Here $G_0=\mR_0^+(0)=c_{n,m}r_1^{2m-n}$.
 	By considering $\mR_0^+(\lambda^{2m})\Gamma(\lambda)$ as a perturbation of $\mR_0^{+}(0)\Gamma(0)$, we can show the kernel is admissible and capture the endpoint, $p=1,\infty$, boundedness.
 	We first consider the contribution of $G_0\Gamma(0)$ to $K_3$:
 	$$
 	\int_{\R^{2n}}  \chi_{r_2 \gg \la r_1\ra} v(z_1) v(z_2) 	G_0(r_1)\Gamma(0)(z_1,z_2) \int_0^\infty \lambda \chi(\lambda)    [R_0^+(\lambda^2)- R_0^-(\lambda^2)](r_2)  \ d\lambda dz_1 dz_2. 
 	$$
 	Identifying the $\lambda$ integral as a constant multiple of the kernel of $\chi(\sqrt{-\Delta})$,   we may bound it as $O(\la r_2\ra^{-N})$ for all $N$   since $\chi(|\xi|)$ is Schwartz.  Therefore, we have the bound
 	$$
 	\int_{\R^{2n}}  \chi_{r_2 \gg \la r_1\ra} v(z_1) v(z_2) 	r_1^{2m-n} r_2^{-n-1} \widetilde \Gamma (z_1,z_2) dz_1 dz_2,
 	$$
 	which is admissible by Lemma~\ref{lem:admissible}. 
 	
 	It remains to consider the contributions of $\mR^+_0(\lambda^{2m})\Gamma_1(\lambda) $ and of $ E(\lambda) \Gamma(0)$.
 	The former can be written as 
 	$$
 	\int_{\R^{2n}}  \frac{v(z_1) v(z_2)\chi_{r_2 \gg \la r_1\ra}}{r_1^{n-2m}}  \int_0^\infty  e^{i  \lambda (r_1\pm r_2) } F(  \lambda r_1) \chi(\lambda)  \lambda^{n-1}   \Gamma_1(\lambda)(z_1,z_2)  F_\pm(\lambda r_2)  \ d\lambda dz_1 dz_2 .
 	$$
 	When $\lambda r_2\ll1$, using $|\Gamma_1(\lambda)|\les \lambda \widetilde \Gamma$, which follows from the mean value theorem, and \eqref{Fbounds} to directly integrating in $\lambda$, we obtain the bound
 	$$
 	\int_{\R^{2n}}  \frac{v(z_1) v(z_2)\chi_{r_2 \gg \la r_1\ra}}{r_1^{n-2m}r_2^{n+1}}\widetilde \Gamma(z_1,z_2)    dz_1 dz_2,
 	$$
 	which is admissible by Lemma~\ref{lem:admissible}. When $\lambda r_2 \gtrsim 1$, integrating by parts $N=\lceil n/2\rceil +1$ times, we have the bound
 	\be\label{K3former}
 	\int_{\R^{2n}}  \frac{v(z_1) v(z_2)\chi_{r_2 \gg \la r_1\ra}}{r_1^{n-2m}|r_1\pm r_2|^N}  \int_0^\infty  \Big| \partial_\lambda^N \big[ F(  \lambda r_1) \chi(\lambda) \widetilde \chi(\lambda r_2)  \lambda^{n-1}   \Gamma_1(\lambda)(z_1,z_2)  F_\pm(\lambda r_2)\big]\Big|  \ d\lambda dz_1 dz_2 .
 	\ee
 	We estimate the $\lambda $ integral by (noting that $\lambda^{j_3} |\partial^{j_3}_\lambda \Gamma_1|\les \lambda \widetilde \Gamma$ and using \eqref{Fbounds})
 	$$
 	\les r_2^{-\frac{n-1}2} \widetilde\Gamma(z_1,z_2)  \sum_{0\leq j_1+j_2+j_3+j_4\leq N, \, j_i\geq 0  }  
 	\int_{\frac1{r_2}}^1  \la  \lambda r_1\ra^{\frac{n+1}2-2m} \lambda^{-j_1} \lambda^{n-1-j_2}      \lambda^{1-j_3}  \lambda^{-\frac{n-1}2-j_4} d\lambda 
 	$$
 	$$
 	\les r_2^{-\frac{n-1}2} \widetilde\Gamma(z_1,z_2)   
 	\int_{\frac1{r_2}}^1  \la  \lambda r_1\ra^{\frac{n+1}2-2m}   \lambda^{\frac{n+1}2-\lceil \frac{n}2\rceil -1} d\lambda 
 	$$
 	$$
 	\les r_2^{-\frac{n-1}2} \widetilde\Gamma(z_1,z_2) 
 	\Big(\int_{\frac1{r_2}}^{\min(\frac1{r_1},1)}  \lambda^{\frac{n }2-\lceil \frac{n}2\rceil -\frac12 }      d\lambda +  \int_{\min(\frac1{r_1},1)}^1 r_1^{\frac{n+1}2-2m}  \lambda^{\lfloor \frac{n}2\rfloor -2m }       d\lambda  \Big).
 	$$
 	Note that $n-\lceil \frac{n}{2}\rceil=\lfloor \frac{n}{2}\rfloor$ is used in the final integral.
 	The first integral is at most $\log(r_2)$. Since $\frac{n-1}{2}+N\geq n+\frac{1}{2}$, its contribution to \eqref{K3former} is at most
 	$$
 	\int_{\R^{2n}}  \frac{v(z_1) v(z_2)\chi_{r_2 \gg \la r_1\ra}}{r_1^{n-2m}r_2^{n+\frac12-}}\widetilde \Gamma(z_1,z_2)    dz_1 dz_2,
 	$$
 	which is admissible by Lemma~\ref{lem:admissible}. Similarly,  the second integral is bounded by  $r_1^{n-2m}$ after multiplying the integrand by $(\lambda r_1)^{\frac{n-1}2}$. Contribution of this to \eqref{K3former} is at most
 	$$
 	\int_{\R^{2n}}  \frac{v(z_1) v(z_2)\chi_{r_2 \gg \la r_1\ra}}{ r_2^{n+\frac12}}\widetilde \Gamma(z_1,z_2)    dz_1 dz_2,
 	$$
 	which, by Lemma~\ref{lem:admissible}, is also admissible.

 	We now consider the contribution of $E(\lambda)\Gamma(0)$:  
 	\be\label{egamma}
 	\int_{\R^{2n}}   v(z_1) v(z_2)\chi_{r_2 \gg \la r_1\ra}\Gamma(0)(z_1,z_2)  \int_0^\infty  e^{\pm  i  \lambda   r_2 }  E(\lambda)(r_1) \chi(\lambda)   \lambda^{n-1}  F_\pm(\lambda r_2)  \ d\lambda dz_1 dz_2 .
 	\ee
 	Using Lemma~\ref{prop:F} and Corollary~\ref{cor:E} when $\lambda r_1\ll 1$.  Using this 
 	when $\lambda r_2\ll 1$ and using $|\Gamma(0)(z_1,z_2)|\leq \widetilde\Gamma(z_1,z_2)$, we bound \eqref{egamma} by direct estimate by 
 	$$
 	\int_{\R^{2n}}  \frac{ v(z_1) v(z_2)\chi_{r_2 \gg \la r_1\ra}\widetilde\Gamma(z_1,z_2) }{r_1^{n-2m-1} r_2^{n+1} }  dz_1 dz_2,
 	$$  
 	which is admissible by Lemma~\ref{lem:admissible} since $n-2m\geq 1$. 
 	
 	When $\lambda r_2\gtrsim 1$ and $\lambda r_1 \ll 1$, we integrate by parts $N=\lceil n/2\rceil +1  $ times to obtain
 	$$
 	\int_{\R^{2n}}   r_2^{ -N}  v(z_1) v(z_2)\chi_{r_2 \gg \la r_1\ra}\widetilde \Gamma (z_1,z_2)   \int_0^\infty  \Big|\partial_\lambda^N\big[  E(\lambda)(r_1) \chi(\lambda)  \chi(\lambda r_1)  \widetilde  \chi(\lambda r_2)  \lambda^{n-1}  F_\pm(\lambda r_2)\big]\Big|  \ d\lambda dz_1 dz_2. 
 	$$
 	Using Corollary~\ref{cor:E} and \eqref{Fbounds}, we bound this by 
 	$$
 	\int_{\R^{2n}}  r_2^{-N} r_1^{1+2m-n} v(z_1) v(z_2)\chi_{r_2 \gg \la r_1\ra}\widetilde \Gamma (z_1,z_2)  \int_{r_2^{-1}}^1   \lambda^{n-\lceil n/2\rceil-1}  (\lambda r_2)^{\frac{1-n}2}  \ d\lambda dz_1 dz_2 
 	$$
 	$$
 	\les  \int_{\R^{2n}}  r_2^{-n-\frac12} r_1^{1+2m-n} v(z_1) v(z_2)\chi_{r_2 \gg \la r_1\ra}\widetilde \Gamma (z_1,z_2)  dz_1 dz_2,
 	$$
 	which is again admissible by Lemma~\ref{lem:admissible}.

 	It remains to consider the case $\lambda r_1\gtrsim 1$. Integrating by parts once we rewrite the $\lambda$ integral in \eqref{egamma} as  
 	\begin{align}\label{eq:K3 bad1}
 		&\frac1{r_2} \int_0^\infty  e^{\pm  i  \lambda   r_2 }  \partial_\lambda[E(\lambda)(r_1)] \ \widetilde\chi(\lambda r_1) \chi(\lambda)   \lambda^{n-1}  F_\pm(\lambda r_2)  \ d\lambda \\
 		&+ \frac1{r_2} \int_0^\infty  e^{\pm  i  \lambda   r_2 }  E(\lambda)(r_1)\ \partial_\lambda \big[ \widetilde\chi(\lambda r_1) \chi(\lambda)   \lambda^{n-1}  F_\pm(\lambda r_2)  \big] \ d\lambda. \label{eq:K3 bad2}
 	\end{align}
 	For the second integral, \eqref{eq:K3 bad2},  we integrate by parts $N=\lceil \frac{n}2\rceil$ more times using  \eqref{Fbounds}, to obtain the bound
 	$$
 	\frac1{r_2^{N+\frac{n}2+\frac12}} \sum_{j_1+j_2 \leq N,  \ 0\leq j_1, j_2} \int_{r_1^{-1}}^1  \big| \partial_\lambda^{j_1} [E(\lambda)(r_1)]\big|  \      \lambda^{\frac{n-3}2-j_2 }   \ d\lambda.  
 	$$
 	Using Corollary~\ref{cor:E} we bound this by
 	$$
 	\les  \frac1{r_2^{N+\frac{n}2+\frac12 }}\Big[\int_{r_1^{-1}}^1  r_1^{2m-n}      \lambda^{\frac{n-3}2-N }   \ d\lambda +  \sum_{j_1+j_2 \leq N,  \ 0\leq j_1, j_2} \int_{r_1^{-1}}^1 r_1^{j_1+\frac{1-n}2}    \lambda^{n-2m -1- j_2 }   \ d\lambda\Big].
 	$$
 	The first integral takes care of the  additional term that arises in Corollary~\ref{cor:E} (for $\lambda r\gtrsim 1$)  in the case $j_1=0$.    Letting $\{n/2\}=  n/2-\lfloor n/2\rfloor$, we bound this by
 	
 	$$
 	\les \frac{r_1^{\{n/2\}+\frac12 +2m-n}+r_1^{\{n/2\}+\frac12}}{r_2^{n+\{n/2\}+\frac12}}
 	\les \frac{r_1^{\{n/2\}+\frac12}}{r_2^{n+\{n/2\}+\frac12}},
 	$$
 	whose contribution is admissable   by Lemma~\ref{lem:admissible2} since $r_2\gg \la r_1\ra$. 
 	
 	For the first integral, \eqref{eq:K3 bad1}, we integrate by parts $N=\lceil \frac{n}2\rceil$ more times after pulling out the phase $e^{i\lambda r_1}$ to obtain the bound
 	$$\frac1{r_2^{ \frac{n}2+\frac12} |r_1\pm r_2|^N} \sum_{j_1+j_2 \leq N,  \ 0\leq j_1, j_2} \int_{r_1^{-1}}^1   \big| \partial_\lambda^{j_1}[\widetilde E(\lambda)(r_1)] \big| \   \lambda^{\frac{n-1}2-j_2 } \ d\lambda 
 	$$
 	$$
 	\widetilde E(\lambda)(r_1):= e^{- i\lambda r_1} \partial_\lambda[E(\lambda)(r_1)]
 	$$
 	Using Corollary~\ref{cor:E}, we bound this by
 	\begin{multline*}
 		\frac1{r_2^{ n+\{n/2\}+\frac12}  } \sum_{j_1+j_2 \leq N,  \ 0\leq j_1, j_2} \int_{r_1^{-1}}^1  r_1 \frac{(\lambda r_1)^{\frac{n+1}2-2m}}{r_1^{n-2m}}    \lambda^{\frac{n-1}2-j_1-j_2 } \ d\lambda \\
 		\les \frac1{r_2^{ n+\{n/2\}+\frac12}  r_1^{\frac{n-3}2} }  \int_{r_1^{-1}}^1 \lambda^{\frac{n }2-2m -\{n/2\} } \ d\lambda
 		\les  \frac{1}{r_2^{n+\{n/2\}+\frac12}},
 	\end{multline*}
 	which is admissible by Lemma~\ref{lem:admissible}.
 \end{proof}

\section{Technical Lemmas}\label{sec:Minv}

It remains only to prove Lemma~\ref{lem:Gamma} stating  that the operators $\Gamma_k(\lambda)$ defined in \eqref{Gammalambda} satisfy the bounds needed to apply Proposition~\ref{lem:low tail low d}. This follows, with some modifications, from the discussion preceeding Lemma~3.5 in \cite{EGWaveOp}.  For the convenience of the reader, we sketch the argument here. In addition, we will state and prove two lemmas on admissable kernels that were used in the proof of Proposition~\ref{lem:low tail low d}. 

We write $n_{\star}$ to denote $n+4$ if $n$ is odd and $n+3$ if $n$ is even. The bounds in Lemma~\ref{prop:F} and Corollary~\ref{cor:E} imply that the operator $R_j$ with kernel
\be\label{R_k}
R_j(x,y):=v(x)v(y)\sup_{0<\lambda<1}|\lambda^{\max(0,j-1)} \partial_\lambda^j \mR_0^+(\lambda^{2m})(x,y)|
\ee
satisfies 
$$
R_j(x,y) \les v(x) v(y) 
\big(|x-y|^{2m+1-n}+|x-y|^{j-(\frac{n-1}{2})}\big),\,\,\,j\geq 1,
$$
$$
R_0(x,y) \les v(x) v(y) 
\big(|x-y|^{2m-n}+|x-y|^{-(\frac{n-1}{2})}\big).
$$
Therefore, $R_j$ is bounded on $L^2(\R^n)$ for $0\leq j\leq \lceil \frac{n}2\rceil+1$ provided that $|V(x)|\les \la x \ra^{-\beta}$ for some $\beta>n_{\star}$.  Indeed,  when $n<4m$, it follows because $R_j$ is Hilbert-Schmidt. Also the second term in the bounds above is always Hilbert-Schmidt. When $n\geq 4m $, we identify $|x-y|^{2m-n}$ (similarly $|x-y|^{2m+1-n}$) as a multiple of the fractional integral operator $I_{2m}:L^{2,\sigma}\to L^{2,-\sigma}$.  
Using the decay of $v(x)v(y)$ and identifying $\sigma =\frac{\beta}{2}$ suffices to apply the Propositions~3.2 and 3.3 in \cite{GV} and establish boundedness on $L^2$. 

Similarly, $\mathcal E(\lambda):=v[\mR^+_0(\lambda^{2m})-\mR^+_0(0)]v$ satisfies by the discussion above and Corollary~\ref{cor:E} that
$$
\|\mathcal E(\lambda)\|_{L^2\to L^2} \les \lambda.
$$

	Now, we define the operator
	$$
	T_0:=U+v\mR_0^+(0)v=M^+(0).
	$$
	By the assumption that zero energy is regular, $T_0$ is invertible, see e.g. \cite{soffernew}. Note that by a Neumann series expansion and the invertibility of $T_0$ we have 
	$$
	[M^+(\lambda)]^{-1} =\sum_{k=0}^\infty (-1)^k T_0^{-1} (\mE(\lambda)T_0^{-1})^{k}.
	$$
	The series converges in the operator norm on $L^2$ for sufficiently small $\lambda$. 
 By the resolvent identity the operator
	$ \partial_\lambda^N[M^+(\lambda)]^{-1} $ is a linear combination of operators of the form 
	$$
	[M^+(\lambda)]^{-1} \prod_{j=1}^J\big[v\big( \partial_\lambda^{N_j}\mR_0^{+}(\lambda^{2m})\big)v[M^+(\lambda)]^{-1}\big],
	$$
	where $\sum N_j=N$ and each $N_j\geq 1$. From the discussion above on $R_j$'s  this representation implies  that  
	\be\label{eq:MGamma}
 \sup_{0<\lambda<\lambda_0}\lambda^{\max(0,N-1)} | \partial_\lambda^N[M^+(\lambda)]^{-1}(x,y)|
	\ee
	is bounded in $L^2$ for $N=0,1,\ldots,\lceil \frac{n}{2}\rceil+1$ provided that $\beta >n_\star$.   

Recalling the definition of $\Gamma_k(\lambda)$, \eqref{Gammalambda}, and noting the $L^2$ boundedness of $R_j$'s above we see that
$$ \sup_{0<\lambda<\lambda_0}\lambda^{\max(0,N-1)} \big| \partial_\lambda^N \big(Uv\mR_0^+(\lambda^{2m}) \big(V\mR_0^+(\lambda^{2m})\big)^{k-1} v \big)(x,y)\big|
$$
is bounded on $L^2$. This yields   Lemma~\ref{lem:Gamma}	when $2m<n<4m$. 

When $n\geq 4m$ we need stronger bounds on the kernel of $\Gamma_k(\lambda)$.   We write the iterated resolvents 
\begin{align}\label{eq:Alambda}
	A(\lambda, z_1,z_2) =  \big[ \big(\mR_0^+(\lambda^{2m})  V\big)^{k-1}\mR_0^+(\lambda^{2m})\big](z_1,z_2).
\end{align}  
For odd $n>4m$.  If   $ k-1 $ is sufficiently large depending on $n,m$ and $|V(x)|\les \la x \ra^{-\frac{n_\star}2-}$, then 
\begin{align*}
	\sup_{0<\lambda <1}| \lambda^{\max\{0,\ell-1\}} \partial_\lambda^\ell 	A(\lambda, z_1,z_2)|&\les \la z_1 \ra^2  \la z_2\ra^2,
\end{align*}
for $0\leq \ell\leq \frac{n+3}{2}=\lceil \frac{n}2\rceil+1$. This follows from the pointwise bounds on $R_j$ above. The iteration of the resolvents smooths out the local singularity $|x-\cdot|^{2m-n}$.  Each iteration improves the local singularity by $2m$, so that after $j$ iterations the local singularity is of size $|x-\cdot|^{2mj-n}$.  Selecting $k-1$ large enough ensures that the local singularity is completely integrated away. See \cite{EGWaveOp} for more details. For even $n\geq 4m$ we get a better bound since we need fewer derivatives: 
$$	\sup_{0<\lambda <1}|\lambda^{\max(0,\ell-1)} \partial_\lambda^\ell 	A(\lambda, z_1,z_2)| \les \la z_1 \ra^{\frac32}  \la z_2\ra^{\frac32},
$$
for $0\leq \ell\leq \frac{n+2}{2}=\lceil \frac{n}2\rceil+1$.

	Finally, recalling that  
	$$
		\Gamma_k(\lambda)=Uv A(\lambda)vM^{-1}(\lambda)v A(\lambda)vU
	$$
	yields Lemma~\ref{lem:Gamma} when $n\geq 4m$.

The following lemmas were used frequently  in the proof of  Proposition~\ref{lem:low tail low d}:
\begin{lemma}\label{lem:admissible}
	Let $K$ be an operator with integral kernel $K(x,y)$ that satisfies the bound
	$$
		|K(x,y)|\les \int_{\R^{2n}} \frac{v(z_1)v(z_2) \widetilde \Gamma(z_1,z_2) \chi_{\{|y-z_2|\gg \la z_1-x\ra \} }}{|x-z_1|^{n-2m-k} |z_2-y|^{n+\ell} } \, dz_1 \, dz_2
	$$
	for some $0\leq k\leq n-2m$ and $\ell>0$.  Then, under the hypotheses of Lemma~\ref{lem:low tail low d}, the kernel of $K$ is admissible, and consequently $K$ is a bounded operator on $L^p(\R^n)$ for all $1\leq p\leq \infty$.
	
\end{lemma}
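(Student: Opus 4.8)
The plan is to verify directly that the kernel $K(x,y)$ is admissible in the sense of the previous section, i.e.\ that $\sup_{x}\int_{\R^n}|K(x,y)|\,dy<\infty$ and $\sup_{y}\int_{\R^n}|K(x,y)|\,dx<\infty$; the asserted $L^p$ boundedness for all $1\le p\le\infty$ then follows from the Schur test. In each estimate I would interchange the order of integration and perform the integral in the ``distant'' variable first. For $\int|K(x,y)|\,dy$ one has $\int_{|y-z_2|\gg\la z_1-x\ra}|z_2-y|^{-n-\ell}\,dy\approx\la z_1-x\ra^{-\ell}$, which reduces matters to bounding
$$
\int_{\R^{2n}}v(z_1)\,v(z_2)\,\widetilde\Gamma(z_1,z_2)\,g(x-z_1)\,dz_1\,dz_2,\qquad g(w):=|w|^{2m+k-n}\la w\ra^{-\ell},
$$
uniformly in $x$. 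For $\int|K(x,y)|\,dx$ the key point is that $\la z_1-x\ra\ge1$, so the cutoff $\chi_{\{|y-z_2|\gg\la z_1-x\ra\}}$ is nonzero only when $|y-z_2|\gg1$, in which case the admissible values of $x$ lie inside $\{|x-z_1|\les|y-z_2|\}$; since $n-2m-k<n$ (because $m\ge1$), integrating the singularity $|x-z_1|^{2m+k-n}$ over that ball contributes $\les|y-z_2|^{2m+k}$, leaving
$$
\int_{\R^{2n}}v(z_1)\,v(z_2)\,\widetilde\Gamma(z_1,z_2)\,|z_2-y|^{2m+k-n-\ell}\,\chi_{\{|z_2-y|\gg1\}}\,dz_1\,dz_2.
$$
Here $k\le n-2m$ forces the exponent $2m+k-n-\ell\le-\ell<0$, so on the region $|z_2-y|\gg1$ the last power is $\le1$ and this is dominated by $\int_{\R^{2n}}v(z_1)v(z_2)\widetilde\Gamma(z_1,z_2)\,dz_1\,dz_2$.

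It then remains to bound these two weighted double integrals, and I would split according to the two regimes appearing in the hypotheses of Proposition~\ref{lem:low tail low d}. When $2m<n<4m$, $\widetilde\Gamma$ is bounded on $L^2$: Cauchy--Schwarz in $z_1$ bounds the first of these by $\|v\,g(x-\cdot)\|_{L^2}\,\|\widetilde\Gamma\|_{L^2\to L^2}\,\|v\|_{L^2}$, while $\int_{\R^{2n}}v(z_1)v(z_2)\widetilde\Gamma(z_1,z_2)\,dz_1\,dz_2\le\|v\|_{L^2}^2\|\widetilde\Gamma\|_{L^2\to L^2}$, so it suffices to check that $\|v\,g(x-\cdot)\|_{L^2}$ is bounded uniformly in $x$. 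Decomposing $g^2=g^2\chi_{\{|w|\le1\}}+g^2\chi_{\{|w|\ge1\}}$, the first piece lies in $L^1$ exactly because $2(n-2m-k)<n$ — which is precisely where the hypothesis $n<4m$ is used, the borderline instance being $k=0$ — while the second lies in $L^\infty$ since $n-2m-k\ge0$ and $\ell>0$; together with $v^2\les\la\cdot\ra^{-\beta}\in L^1\cap L^\infty$ (using $\beta>n$), Young's inequality yields the claimed uniform bound. When $n\ge4m$ I would instead invoke the pointwise hypothesis $\widetilde\Gamma(z_1,z_2)\les\la z_1\ra^{-\frac n2-}\la z_2\ra^{-\frac n2-}$, under which both double integrals factor: the factor $\int v(z_2)\la z_2\ra^{-\frac n2-}\,dz_2$ (and its analogue in $z_1$) converges since $v\les\la\cdot\ra^{-\beta/2}$ with $\beta>n$, and $\int v(z_1)\la z_1\ra^{-\frac n2-}g(x-z_1)\,dz_1\les\int\la z_1\ra^{-\frac\beta2-\frac n2-}g(x-z_1)\,dz_1\les1$ uniformly in $x$, again by splitting $g$ into its pieces near and away from the origin — no squaring is needed here, which is what lets this regime accommodate $n\ge4m$.

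The step that will require the most care is the bookkeeping of the cutoff $\chi_{\{|y-z_2|\gg\la z_1-x\ra\}}$, which has to be exploited twice and in two different ways: once to produce the decay $\la z_1-x\ra^{-\ell}$ that absorbs the local singularity $|x-z_1|^{2m+k-n}$ after the $y$-integration, and once to force $|y-z_2|\gg1$, which is exactly what renders the a priori non-integrable power $|z_2-y|^{2m+k-n-\ell}$ harmless near $z_2=y$ in the $x$-integration. The only other subtlety is arithmetic: one must notice that the Cauchy--Schwarz argument in the range $2m<n<4m$ needs $g^2$ to be locally integrable, which holds exactly on that range, whereas for $n\ge4m$ the stronger pointwise bound on $\widetilde\Gamma$ supplied by Proposition~\ref{lem:low tail low d} is what makes the unsquared integral converge.
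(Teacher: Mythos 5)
Your proposal is correct and follows essentially the same route as the paper: verify admissibility via the Schur test by integrating the distant variable first (polar coordinates using the cutoff $\chi_{\{|y-z_2|\gg\la z_1-x\ra\}}$), then control the remaining weighted double integral by Cauchy--Schwarz with $\|\widetilde\Gamma\|_{L^2\to L^2}$ when $2m<n<4m$ and by the pointwise decay \eqref{eq:tildegamma} when $n\geq 4m$. The only differences are cosmetic bookkeeping: you retain the extra $\la z_1-x\ra^{-\ell}$ gain from the $y$-integration and integrate the singularity $|x-z_1|^{2m+k-n}$ over the ball $|x-z_1|\les|z_2-y|$, whereas the paper simply uses $|z_2-y|\gtrsim 1$ and splits $|x-z_1|\gtrless 1$, swapping the exponents on the two factors in the far regime.
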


\begin{proof}
	
	We first consider integration in $y$,
	\begin{align*}
		\int_{\R^n}|K(x,y)|\, dy \les 
		\int_{\R^{2n}}\frac{v(z_1)v(z_2) \widetilde \Gamma(z_1,z_2) \chi_{\{|y-z_2|\gg \la z_1-x\ra \} }}{|x-z_1|^{n-2m-k}  }   \int_{\R^n} \frac{\chi_{\{|y-z_2|\gg \la z_1-x\ra \} }}{|z_2-y|^{n+\ell}} \, dy\, \, dz_1 \, dz_2.
	\end{align*}
	Writing the $y$ integral in polar coordinates centered at $z_2$, and noting that $|z_2-y|\gtrsim 1$, we bound this by
	\begin{align*}
		\int_{\R^n}|K(x,y)|\, dy &\les 
		\int_{\R^{2n}}\frac{v(z_1)v(z_2) \widetilde \Gamma(z_1,z_2) }{|x-z_1|^{n-2m-k}  } \int_{1}^\infty r^{-1-\ell}  \, dr  \, dz_1 \, dz_2\\
		&\les \int_{\R^{2n}}\frac{v(z_1)v(z_2) \widetilde \Gamma(z_1,z_2) }{|x-z_1|^{n-2m-k}  } \, dz_1 \, dz_2
	\end{align*}
	If $2m<n<4m$, then the singularity in $z_1$ is locally $L^2$ and one bounds this as
	$$
		\int_{\R^{n}}|K(x,y)|\, dy \les \|v(\cdot) |x-\cdot|^{k+2m-n}\|_2 \| \widetilde \Gamma \|_{2\to 2} \| v(z_2)\|_2 \les 1,
	$$
	uniformly in $x$.  If $n\geq 4m$, one has
	$$
		\int_{\R^n}|K(x,y)|\, dy \les \int_{\R^{2n}}\frac{v(z_1)v(z_2)  \la z_1\ra^{-\frac{n}{2}-} \la z_2\ra^{-\frac{n}{2}-} }{|x-z_1|^{n-2m-k}  } \, dz_1 \, dz_2 \les  \la x\ra^{k+2m-n} \les 1,
	$$
	uniformly in $x$ since $k+2m-n\leq 0$.
	
	Next, integration in $x$ follows identically when $|x-z_1| \gtrsim 1$ noting that since $|y-z_2| \gg |z_1-x|$ we have
	$$
		\frac{1}{|x-z_1|^{n-2m-k}|z_2-y|^{n+\ell}  } \leq \frac{1}{|x-z_1|^{n+\ell}  |z_2-y|^{n-2m-k}  }.
	$$
	If $|x-z_1|<1$, we use polar co-ordinate in $x$ centered at $z_1$ to bound with
	\begin{multline*}
		\int_{\R^n} |K(x,y)|\,dx \les \int_{\R^{2n}} \frac{v(z_1)v(z_2) \widetilde \Gamma(z_1,z_2) \chi_{\{|y-z_2|\gg 1 \} }}{ |z_2-y|^{n+\ell} }  \int_0^1 r^{2m+k-1}\, dr\, \, dz_1 \, dz_2\\ 
		\les \int_{\R^{2n}}   v(z_1)v(z_2) \widetilde \Gamma(z_1,z_2)   \, dz_1 \, dz_2 .
	\end{multline*}
	Which is bounded uniformly in $y$.  
	
\end{proof}

We also need the following bound.
\begin{lemma}\label{lem:admissible2}
	
	Let $K$ be an operator with integral kernel $K(x,y)$ that satisfies the bound
	$$
	|K(x,y)|\les \int_{\R^{2n}} \frac{v(z_1)v(z_2) \widetilde \Gamma(z_1,z_2) \chi_{\{|y-z_2|\gg \la z_1-x\ra \}} |x-z_1|^{\ell} }{  |z_2-y|^{n+\ell} } \, dz_1 \, dz_2
	$$
	for some $\ell>0$.  Then, under the hypotheses of Lemma~\ref{lem:low tail low d}, the kernel of $K$ is admissible, and consequently $K$ is a bounded operator on $L^p(\R^n)$ for all $1\leq p\leq \infty$.
\end{lemma}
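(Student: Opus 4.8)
The plan is to verify directly that the kernel of $K$ is admissible in the sense of Section~\ref{sec:prop pf}, following the same template as the proof of Lemma~\ref{lem:admissible}. The only new feature is that the factor $|x-z_1|^{\ell}$ now sits in the numerator; this is harmless because the cutoff $\chi_{\{|y-z_2|\gg\la z_1-x\ra\}}$ forces $|x-z_1|\les\la z_1-x\ra\les |y-z_2|$, so that $|x-z_1|^\ell$ is always controlled by the built-in separation. As in Lemma~\ref{lem:admissible}, both integrations will reduce to the single estimate $\int_{\R^{2n}}v(z_1)v(z_2)\widetilde\Gamma(z_1,z_2)\,dz_1\,dz_2\les 1$, which holds under the standing hypothesis $\beta>n$: when $2m<n<4m$ it equals $\langle v,\widetilde\Gamma v\rangle\leq\|v\|_2^2\,\|\widetilde\Gamma\|_{L^2\to L^2}<\infty$ since $\beta>n$ gives $v\in L^2$, and when $n\geq 4m$ it is bounded using \eqref{eq:tildegamma} by $\big(\int_{\R^n}v(z)\la z\ra^{-\frac n2-}dz\big)^2<\infty$, again because $\beta>n$.

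First I would bound $\int_{\R^n}|K(x,y)|\,dy$ uniformly in $x$. Fixing $x,z_1,z_2$ and passing to polar coordinates centered at $z_2$, on the support of the cutoff one has $|y-z_2|\gtrsim\la z_1-x\ra\geq 1$, so
$$
\int_{\R^n}\frac{\chi_{\{|y-z_2|\gg\la z_1-x\ra\}}}{|z_2-y|^{n+\ell}}\,dy\les\int_{\la z_1-x\ra}^\infty r^{-1-\ell}\,dr\les\la z_1-x\ra^{-\ell},
$$
and since $|x-z_1|^\ell\leq\la z_1-x\ra^\ell$ the product $|x-z_1|^\ell\la z_1-x\ra^{-\ell}\les 1$. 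Hence $\int_{\R^n}|K(x,y)|\,dy\les\int_{\R^{2n}}v(z_1)v(z_2)\widetilde\Gamma(z_1,z_2)\,dz_1\,dz_2\les 1$, uniformly in $x$.

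Next I would bound $\int_{\R^n}|K(x,y)|\,dx$ uniformly in $y$. Fixing $y,z_1,z_2$, on the support of the cutoff the set of admissible $x$ is contained in $\{|x-z_1|\les|y-z_2|\}$ (and is empty unless $|y-z_2|\gtrsim 1$), so
$$
\int_{\R^n}\chi_{\{|y-z_2|\gg\la z_1-x\ra\}}\,|x-z_1|^\ell\,dx\les\int_{|x-z_1|\les|y-z_2|}|x-z_1|^\ell\,dx\les|y-z_2|^{n+\ell},
$$
which exactly cancels the denominator $|z_2-y|^{n+\ell}$. Thus again $\int_{\R^n}|K(x,y)|\,dx\les\int_{\R^{2n}}v(z_1)v(z_2)\widetilde\Gamma(z_1,z_2)\,dz_1\,dz_2\les 1$, uniformly in $y$. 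Combining the two bounds shows $K$ has an admissible kernel, and the Schur test gives $L^p$-boundedness for all $1\leq p\leq\infty$.

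There is no serious obstacle here; the only point requiring care is the bookkeeping of the cutoff, namely that $\{|y-z_2|\gg\la z_1-x\ra\}$ forces $|x-z_1|\les|y-z_2|$, so that the positive power $|x-z_1|^\ell$ is either absorbed against $\la z_1-x\ra^{-\ell}$ in the $y$-integral or produces exactly $|y-z_2|^{n+\ell}$ upon integration over a ball of radius $\sim|y-z_2|$ in the $x$-integral. In both cases one is left with the same integral involving $v\otimes v$ and $\widetilde\Gamma$ that already appeared in Lemma~\ref{lem:admissible}, and which is finite precisely because $\beta>n$.
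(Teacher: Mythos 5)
Your proof is correct and follows essentially the same route as the paper's: polar coordinates centered at $z_2$ for the $y$-integral and at $z_1$ for the $x$-integral, with the cutoff $\{|y-z_2|\gg\la z_1-x\ra\}$ absorbing the factor $|x-z_1|^\ell$, reducing everything to $\int v\,\widetilde\Gamma\,v\les 1$. The only cosmetic differences are that you avoid the paper's reduction to $|x-z_1|>1$ (handled there by falling back on Lemma~\ref{lem:admissible}) and you spell out the finiteness of $\int v(z_1)\widetilde\Gamma(z_1,z_2)v(z_2)\,dz_1\,dz_2$ in both regimes, which the paper leaves implicit.
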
 
 \begin{proof} 
Without loss of generality, we may assume that $|x-z_1|>1$.  If not, we may bound $|x-z_1|^\ell \les 1$ and apply Lemma~\ref{lem:admissible}.  We consider the $y$ integral first and use polar co-ordinates centered at $z_2$ to see
	\begin{multline*}
		\int_{\R^n}|K(x,y)| \, dy\les \int_{\R^{2n}} v(z_1)v(z_2) \widetilde \Gamma(z_1,z_2)|x-z_1|^{\ell}   \int_{|x-z_1|}^\infty r^{-1-\ell} \, dz_1 \, dz_2\\
		\les \int_{\R^{2n}} v(z_1)v(z_2) \widetilde \Gamma(z_1,z_2)  \, dz_1 \, dz_2 \les 1.
	\end{multline*}
	The bound holds uniformly in $x$. 
	
	For the $x$ integral, we use polar co-ordinates centered at $z_1$ to see
	\begin{multline*}
		\int_{\R^n}|K(x,y)| \, dx\les \int_{\R^{2n}} \frac{v(z_1)v(z_2) \widetilde \Gamma(z_1,z_2) \chi_{\{|y-z_2|\gg \la z_1-x\ra \}}   }{  |z_2-y|^{n+\ell} }  \int_{0}^{|z_2-y|} r^{n+\ell-1} \,dr \, dz_1 \, dz_2\\
		\les \int_{\R^{2n}} v(z_1)v(z_2) \widetilde \Gamma(z_1,z_2)  \, dz_1 \, dz_2 \les 1,
	\end{multline*}
	uniformly in $y$.
	
\end{proof}

\end{document}